\title[Bounding mld of general arrangement varieties]{Bounding minimal log discrepancies of general arrangement varieties}
\author{Leandro Meier}
\date{\today}
\address{Leandro Meier, Mathematisches Institut, Universit\"at Jena, Inselplatz 5, 07743 Jena, Deutschland.}
\email{\href{mailto:leandro.meier@uni-jena.de}{leandro.meier@uni-jena.de}}
\subjclass[2020]{14B05, 14M99, 14E99}
\keywords{Shokurov's conjecture, minimal log discrepancy, T-varieties, general arrangement varieties}
\newtheorem{theorem}{Theorem}[section]
\newtheorem{proposition}[theorem]{Proposition}
\newtheorem{lemma}[theorem]{Lemma}
\newtheorem{remark}[theorem]{Remark}
\newtheorem{conjecture}[theorem]{Conjecture}
\newtheorem*{theorem*}{Theorem}
\newtheorem*{conjecture*}{Conjecture}
\theoremstyle{definition} 
\newtheorem{definition}[theorem]{Definition}
\newtheorem{example}[theorem]{Example}
\DeclareMathOperator{\cadiv}{CaDiv}
\DeclareMathOperator{\coeff}{coeff}
\DeclareMathOperator{\Cl}{Cl}
\DeclareMathOperator{\codim}{codim}
\DeclareMathOperator{\Cone}{Cone}
\let\d\relax
\DeclareMathOperator{\d}{discrep}
\DeclareMathOperator{\ex}{Ex}
\DeclareMathOperator{\face}{face}
\DeclareMathOperator{\Hom}{Hom}
\DeclareMathOperator{\img}{im}
\let\int\relax
\DeclareMathOperator{\int}{int}
\DeclareMathOperator{\Loc}{Loc}
\DeclareMathOperator{\mld}{mld}
\DeclareMathOperator{\Pol}{Pol}
\DeclareMathOperator{\Spec}{Spec}
\DeclareMathOperator{\supp}{Supp}
\DeclareMathOperator{\tail}{tail}
\DeclareMathOperator{\TV}{TV}
\newcommand{\sslash}{\mathbin{/\mkern-6mu/}}
\newcommand{\nocontentsline}[3]{}
\newcommand{\tocless}[2]{\bgroup\let\addcontentsline=\nocontentsline#1{#2}\egroup}
\begin{document}
\begin{abstract}
  The minimal log discrepancy is an invariant of singularities that plays an important role in the birational classification of algebraic varieties. Shokurov conjectured that the minimal log discrepancy can always be bounded from above in terms of the dimension of the variety. We prove this conjecture for general arrangement varieties, a particular class of $T$-varieties, adding to previous results on this conjecture which include threefolds, toric varieties, and local complete intersection varieties.
\end{abstract}

\maketitle

\tableofcontents

\pagebreak
\section{Introduction}
The minimal log discrepancy is an important invariant in the birational classification of varieties. It has uses in the Minimal Model Program (MMP). Even when run on smooth varieties, to run the MMP successfully in general, one needs to allow varieties with certain mild singularities, and the minimal log discrepancy provides a way of characterizing a minimal class of singularities which need to be allowed.\\
The minimal log discrepancy was introduced in 1988 by Shokurov in \cite{Shok1988OpenProb}, along with the following conjecture regarding its boundedness. For an introduction to these concepts, please see Section \ref{sec:prelims}. 
\begin{conjecture*}[Shokurov, 1988]\label{conj:shok}
  Let $X$ be a normal $\mathbb{Q}$-Gorenstein variety, let $x \in X$ be a closed point. Then $\mld_{x}\left( X \right) \leq \dim \left( X \right)$.
\end{conjecture*}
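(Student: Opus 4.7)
Since Shokurov's conjecture is open in full generality, the plan is to prove it under the standing assumption that $X$ is a general arrangement variety. Such an $X$ admits an explicit combinatorial description as a rational $T$-variety via a divisorial fan on the rational quotient $\mathbb{P}^{c}$, where $c = \dim X - \dim T$ is the complexity, and the divisorial fan is supported on $c$ hyperplanes in general position. Via the Altmann--Hausen(--S\"u\ss) correspondence, torus-invariant divisorial valuations on $X$ are parametrised by explicit combinatorial data, and their log discrepancies are given by closed-form expressions in terms of primitive generators, the tailcone, and the canonical-divisor datum.

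The argument would then proceed in two steps. First, I would use the $T$-action together with upper semicontinuity of $\mld$ to replace $x$ by a most-special closed point lying in the closure of a minimal $T$-orbit; on an equivariant affine neighbourhood of such a point $X$ is encoded by a single polyhedral divisor $\mathcal{D}$, and the relevant invariant valuations are indexed by rays of the tailcone $\sigma$ together with vertices of the polyhedral coefficients $\mathcal{D}_{Z}$ over the hyperplanes through $x$. Second, I would run through this finite list of candidate valuations $E$, using the explicit formula for $a(E; X)$ to exhibit one for which $a(E; X) \leq \dim X = \dim T + c$, which is enough by definition of $\mld$.

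The main obstacle, as I expect it, is precisely this last bound. In the toric case ($c=0$) it reduces to a classical lattice-point argument on $\sigma$ yielding a bound of $\dim T$. For $c \geq 1$, each hyperplane of the arrangement meeting $x$ should contribute at most one unit to the total, but making this precise requires controlling the interplay between the tailcone and the coefficient polyhedra. I anticipate this to require an induction on $c$, combined with a convex-geometric estimate that genuinely uses the general-position hypothesis on the arrangement --- this is the point at which an arbitrary arrangement would fail to give the clean bound, and where the bulk of the combinatorial work is likely concentrated.
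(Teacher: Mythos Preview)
Your proposal takes a genuinely different route from the paper, and it contains at least one real gap.

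\medskip
\textbf{The reduction step.} You invoke ``upper semicontinuity of $\mld$'' to pass from an arbitrary closed point $x$ to a most-special point $x_{0}$ in the closure of a minimal $T$-orbit. But $\mld$ is not known to be upper semicontinuous; the standing conjecture (Ambro's LSC conjecture, see Section~\ref{sec:lsc}) goes the other way, and even that is open in this generality. Lower semicontinuity at $x_{0}$ would yield $\mld_{x_{0}}(X) \leq \mld_{x}(X)$, which is the wrong inequality for your purpose: bounding $\mld_{x_{0}}$ tells you nothing about $\mld_{x}$. The paper handles the reduction without any semicontinuity: it restricts to a $T$-invariant affine open (Lemma~\ref{lem:opensubset} shows $\mld$ is unchanged), and then proves a structural dichotomy (Lemma~\ref{lem:not_cone}) that any affine general arrangement variety which is not already a cone singularity is either toroidal---hence covered by Borisov's toric result---or sits $T$-equivariantly as an open subset of a cone singularity of the same dimension.

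\medskip
\textbf{The core bound.} For cone singularities the paper does \emph{not} attempt the direct polyhedral-combinatorial estimate you outline. Instead it takes a small $T$-equivariant $\mathbb{Q}$-factorialization $g\colon Y \to X$ (Lemma~\ref{lem:small-Q-fact}), so that $\mld_{x}(X)=\mld_{y}(Y)$ and $\Cl(Y)\cong\Cl(X)$. By the Hausen--Hische--Wrobel theorem the Cox ring of a general arrangement variety is a complete intersection, so $Y$ is a geometric quotient of an open set $\hat{Y}$ of a complete intersection by a quasitorus $H$ acting with finite isotropy and trivial generic isotropy on divisors. A quotient comparison (Proposition~\ref{prop:quotient}, proved via a Luna-slice reduction plus Masek's finite-cover lemma) gives $\mld_{y}(Y) \leq \mld_{W}(\hat{Y})$ for any irreducible fibre component $W$; finally Ein--Musta\c{t}\u{a}'s result for local complete intersections bounds $\mld_{W}(\hat{Y}) \leq \codim_{\hat{Y}}(W) \leq \dim X$.

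\medskip
In short, the paper trades your proposed convex-geometric induction on the complexity for Cox-ring structure plus the already-known complete-intersection case. Your combinatorial approach is not unreasonable in spirit---the Liendo--S\"u\ss{} formulas do give explicit log discrepancies for $T$-invariant divisors, and the minimum is achieved on such a divisor via an equivariant resolution---but the reduction to a special point needs a different mechanism than semicontinuity, and the ``bulk of the work'' you flag at the end remains entirely unaddressed.
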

This conjecture has been proven in dimension at most $3$ by Kawamata in \cite{Kaw1993App} and for toric varieties by Borisov in \cite{Bor1993TorMinDisc}. In \cite{EiMu2004InversionOfAdj}, Ein and Musta\c{t}\u{a} proved the stronger lower semi-continuity conjecture for minimal log discrepancies, from which Shokurov's conjecture follows, for local complete intersection varieties.\\
More recently, in \cite{NaSh2023InvAdjQuotS} Nakamura and Shibata showed that the lower semi-continuity conjecture holds for local complete intersections in quotient varieties, and in \cite{LiZh2025MldOrbif}, Li and Zhou proved Shokurov's conjecture for isolated Fano cone singularities of arbitrary complexity.\\
The goal of article is to prove Shokurov's conjecture for general arrangement varieties, which are a class of $T$-varieties of arbitrary complexity satisfying some conditions that will be made explicit in Section \ref{sec:T-vars}. $T$-varieties are a generalisation of toric varieties, in the sense that they are varieties with an effective torus action, however there is not usually a dense orbit for this action. This enlarges the class of varieties one can consider, while still providing a combinatorial language due to the torus action.\\
The article is organised as follows: in Section \ref{sec:prelims}, we introduce $T$-varieties, revise the basic knowledge of birational geometry needed to understand the article and finally, we introduce the minimal log discrepancy. In Section \ref{sec:mld-bound}, we prove Shokurov's conjecture for general arrangement varieties as well as some auxiliary results. The key ingredient to the proof of our main result is that a general arrangement variety, under additional conditions that will be made explicit throughout Section \ref{sec:mld-bound}, can be presented as a quotient of a complete intersection by a diagonalizable group, due to the results on Cox rings by Hausen, Hische, and Wrobel in \cite{HaHiWr2019OnTorusAct}, and that Shokurov's conjecture holds for local complete intersection varieties, which follows from the more general result in \cite{EiMu2004InversionOfAdj}. In the appendix, we introduce the language of polyhedral divisors, which provides a combinatorial approach to general arrangement varieties and allows us to tie up a few loose ends from Section \ref{sec:mld-bound}. However, most of the arguments in this article can be understood without any knowledge of polyhedral divisors.
\tocless{\section*{Acknowledgements}} I would like to thank Joaqu\'{i}n Moraga for his helpful comments about the behavior of minimal log discrepancies under quotients by finite groups and about the connections between $T$-equivariant log resolutions and minimal log discrepancy. I would like to thank Hendrik S\"{u}ss for many helpful discussions, comments, and questions answered on $T$-varieties and minimal log discrepancies in general.

\section{Preliminaries}\label{sec:prelims}
Unless otherwise stated, all varieties will be considered over $\mathbb{C}$ throughout this text.
\subsection{T-Varieties and general arrangement varieties}\label{sec:T-vars}
$T$-varieties are class of varieties generalizing toric varieties by not requiring the torus action to have a dense orbit, hence allowing for a wider class of varieties. At the same time, the presence of an effective torus action still provides some combinatorial tools to study these varieties. 
The difference in dimensions of the variety and the acting torus, which is often strictly larger than zero, is called the complexity of the $T$-variety.
In the following, we give a brief introduction of $T$-varieties, defining just the notions needed in order to understand the upcoming sections of this article. For a more detailed introduction to this topic, we recommend the survey article \cite{AIOPSV2012GeomOfTVar} as a starting point.
\begin{definition}
  A \emph{$T$-variety} is a normal variety $X$ with an effective action of a torus $T \cong \left( \mathbb{C}^{*} \right)^{k}$ for some $k \leq \dim \left( X \right)$. We call the difference $\dim \left( X \right) - k$ the \emph{complexity} of the torus action.
\end{definition}
\begin{remark}
  A $T$-variety of complexity zero is nothing but a toric variety. 
\end{remark}
Throughout this article, in particular in the proofs, we will often reduce to $T$-varieties with a so-called good torus action, since there are better tools to work with in that case. The resulting varieties are called cone singularities.
\begin{definition}
  Let $X$ be an affine $T$-variety whose torus action has a unique fixed point $x_{0}\in X$ such that $x_{0}$ lies in the closure of every orbit, then we call $\left( X,x_{0} \right)$ a \emph{cone singularity}.
\end{definition}
We may also refer to the cone singularity as just $X$, since the fixed point is unique.
\begin{remark}\label{rem:cone}
  Equivalently, a cone singularity is an affine  $T$-variety $X$ which has an equivariant embedding into an affine space on which the torus $T$ acts with positive weights on the coordinates.
\end{remark}
\begin{example}\
  \begin{enumerate}[(i)]
    \item Every toric variety can be understood as a $T$-variety of complexity greater than zero as well, by restricting to an effective action of a smaller subtorus. For more on these so-called toric downgrades, we recommend the article \cite{AlHa2006PolyAndTorus} as a starting point.
    \item For an example of a $T$-variety that is not a toric downgrade, consider the affine variety $X$ defined as the vanishing locus of the polynomial $x^{2} + y^{3}+ z^{3}$ in $\mathbb{A}^{3}$. We can define an action of the one-torus $\mathbb{C}^{*}$ by specifying $t. \left( x,y,z \right) := \left( t^{3}x, t^{2}y, t^{2}z \right)$, this defines an effective action of $\mathbb{C}^{*}$ on $X$, giving $X$ the structure of a $T$-variety of complexity one.\\
    Since the weights used to define the torus action are positive, we see directly that $X$ is a cone singularity.
  \end{enumerate}
\end{example}
We now introduce some notions of quotients in algebraic geometry that are needed in order to define general arrangement varieties at this point and that we will use in the proofs of Section \ref{sec:mld-bound} as well.\\
Let $X$ be a variety and $G$ a group acting on $X$. We call a morphism $q \colon X \to Y$ \emph{affine} if $q^{-1}\left( V \right) \subseteq X$ is an affine variety whenever $V \subseteq Y$ is an affine open subset.\\
A \emph{good quotient} of $X$ by $G$ is a surjective affine $G$-invariant morphism $q \colon X \to Y$ such that the pullback $q^{*} \colon \mathcal{O}_{Y} \to q_{*}\left( \mathcal{O}_{X} \right)^{G}$ is an isomorphism. In a good quotient $q \colon X \to Y$, $G$-orbits on $X$ and points in $Y$ are not necessarily in a bijective correspondence. Whenever there is a bijective correspondence between these two sets, we call $q$ a \emph{geometric quotient}. We will denote a good quotient by $X \sslash G$ and a geometric quotient by $X/G$.\\
A \emph{rational quotient} of $X$ by $G$ is a $G$-invariant dominant rational map $\pi \colon X \dashrightarrow Y$ such that $\mathcal{O}_{X}^{G} =  \pi^{*}\mathcal{O}_{Y}$ holds. A \emph{representative} of a rational quotient is a surjective morphism $\psi  \colon W \to V$ from a non-empty $G$-invariant open subset $W \subseteq X$ to an open subset $V \subseteq Y$, representing $\pi$. Rosenlicht's theorem (see for instance \cite{Ros1956BasicThms}) guarantees the existence of rational quotients for connected groups $G$.\\
Let $X$ be a $T$-variety, we denote by $X_{0}$ the open subset of points in $X$ that have finite isotropy group under the torus action.
\begin{definition}[Definition 3.12 in \cite{HaHiWr2019OnTorusAct}]
  Let $X$ be a $T$-variety. A \emph{maximal orbit quotient} for $X$ is a rational quotient $\pi \colon X \dashrightarrow Y$ which admits a representative $\psi \colon W \to V$ and prime divisors $C_{0}, \dots, C_{r}$ on $Y$ such that the following hold:
  \begin{enumerate}[(i)]
    \item We have $W \subseteq X_{0}$ and the complements $X_{0} \setminus W \subseteq X_{0}$ and $Y \setminus V \subseteq Y$ are of codimension at least two,
    \item for every $i \in \left\{0, \dots, r\right\}$, the inverse image $\psi^{-1}\left( C_{i} \right) \subseteq W$ is a union of prime divisors $D_{i1}, \dots, D_{in_{i}}$ in $W$,
    \item all $T$-invariant prime divisors on $X_{0}$ with nontrivial generic isotropy group occur among the $D_{ij}$,
    \item every sequence $J = \left( j_{0}, \dots, j_{r} \right)$ with $1 \leq j_{i}\leq n_{i}$ defines a geometric quotient $\psi  \colon W_{J} \to V$ for the torus action, where $W_{J}:= W \setminus \bigcup_{j\ne j_{i}}D_{ij}$.
  \end{enumerate}
  We call $C_{0}, \dots, C_{r} \subseteq Y$ a collection of \emph{doubling divisors} for $\pi \colon X \dashrightarrow Y$.
\end{definition}

\begin{definition}
  A \emph{general arrangement variety} is a  $T$-variety $X$ of complexity $d$ with a maximal orbit quotient $\pi \colon X \dashrightarrow \mathbb{P}^{d} $ such that the doubling divisors $C_{0}, \dots, C_{r}$ form a general hyperplane arrangement in $\mathbb{P}^{d}$.
\end{definition}
\begin{example}
  \begin{enumerate}[(i)]
    \item Any rational complexity one $T$-variety is a general arrangement variety, in essence due to the fact that any point configuration on $\mathbb{P}^{1}$ is trivially a general hyperplane arrangement. For more details, see Remark 6.2 in \cite{HaHiWr2019OnTorusAct}.
  \item We consider the variety $X$, defined as the affine cone over the hypersurface $Q$, where $Q$ is the vanishing locus of the polynomial $\sum_{i=0}^{n} x_{i}y_{i}$ inside $\mathbb{P}^{n}\times \mathbb{P}^{n}$ and embedded into $\mathbb{P}^{\left( n+1 \right)^{2}-1}$ via the Segre embedding. Thus $X$ is an affine singular variety of dimension $2n$.\\
    The minimal codimension of a torus which can act effectively on $Q$ or $X$ is the same, since $\dim X = \dim Q +1$ but on $X$ there is additionally an action by a one-torus with weights all one. It is simpler to deduce this minimal codimension on $Q$. A torus $T$ acting in effectively on $Q$ with weights $a_{0}, \dots, a_{n}$, $b_{0}, \dots, b_{n}$ needs to satisfy the $n$ linearly independent conditions $a_{0} + b_{0} = a_{i}+b_{i}$ for $i\in \left\{1,\dots,n\right\}$. Hence the minimal codimension in $Q$ for a torus $T$ acting effectively on $Q$ is $\dim Q - n = n-1$. Hence for all $n \geq 3$, one can only consider $Q$ resp. $X$ as a $T$-variety of complexity at least two. \\
    Moreover, an effective action of an $\left( n+1 \right)$-dimensional torus $T$ on $X$ is realised by the following matrix of size $\left( n+1 \right) \times \left( 2n+2 \right)$, where each column is a vector defining a multidegree in $\mathbb{Z}^{n+1}$ on the corresponding coordinate in $\mathbb{P}^{n}\times\mathbb{P}^{n}$:
    \[
      \left(\begin{array}{ c ccc | c ccc }
          0 &  1 & \dots & 0 & 0 &-1 & \dots & 0 \\
          \vdots & \vdots & \ddots & \vdots & \vdots & \vdots & \ddots & \vdots \\
          0 & 0 & \dots & 1 & 0 & 0 & \dots & -1 \\
          1 & 1 & 1 & 1 & 1 & 1 & 1 & 1
      \end{array}\right).
    \]
    There is a rational quotient map $\pi \colon X \dashrightarrow \mathbb{P}^{n-1}$ given as follows: on the open subset $X_{0} = X \setminus V \left( x_{0} y_{0} \right)$, every point has finite stabilizer under the torus action. We let $W=X_{0}$ and define the rational quotient $\pi  \colon X \dashrightarrow \mathbb{P}^{n-1}$ on $W$ by the representative $\psi \colon W \to \mathbb{P}^{n-1}$ as 
    \[
      \psi  \left([x_{0} : \dots : x_{n}],[y_{0} : \dots : y_{n}]   \right) = [x_{1}y_{1} : \dots : x_{n}y_{n}].
    \]
    Note that the dimension of the codomain of $\psi$ reflects the complexity of the torus action. We denote the homogeneous coordinates on $\mathbb{P}^{n-1}$ by $[z_{1} : \dots : z_{n}]$.\\
    In Section 3.1 of \cite{Cab2019KESymGAV}, it is shown that the doubling divisors of this rational quotient are precisely the coordinate hyperplanes $H_{i} = V \left( z_{i} \right)$ as well as the hyperplane $H_{0 }= V \left( \sum_{i=0}^{n} z_{i} \right)$ in $\mathbb{P}^{n-1}$. Moreover, the data $\pi$ and $H_{0}, \dots, H_{n}$ form a maximal orbit quotient. These hyperplanes form a general hyperplane arrangement in $\mathbb{P}^{n-1}$, hence $X$ is a general arrangement variety. 
  \end{enumerate}
\end{example}
\subsection{Background on divisors}
In this subsection, we recall some notions on divisors and birational geometry that we will use throughout the article. We start by recalling some more well-known facts both to fix the notation and to facilitate reading for those who are not yet familiar with these terms. We will assume that $X$ is a normal variety throughout this subsection.\\
By a \emph{prime (Weil) divisor} $D$ on a normal variety $X$, we denote an irreducible closed subvariety $D$ of codimension one in $X$. 
We consider the free abelian group generated by the prime divisors on $X$, the elements of this group are called \emph{Weil divisors} or just \emph{divisors} if no confusion is likely to arise from this. 
We call a divisor $D$ on $X$ \emph{principal} if it is of the form $\sum_{D \subseteq X}^{} \operatorname{ord}_{D}\left( f \right)$, where $f\in \mathbb{C}^{*}\left( X \right)$, the sum runs over all prime divisors $D \subseteq X$ and $\operatorname{ord}_{D}\left( f \right)$ denotes the order of vanishing of $f$ along $D$. One can show that this is indeed a finite sum. 
We define the \emph{class group} of $X$ as the quotient of the group of Weil divisors on $X$ by the subgroup generated by all principal divisors on $X$, we will denote it by $\Cl \left( X \right)$. \\
We call a Weil divisor $D$ on $X$ \emph{Cartier} if there exists an open cover $\left\{ U_{i}\right\}_{i\in I}$ of $X$ as well as rational functions $f_{i}\in \mathbb{C}^{*} \left( U_{i} \right)$ for each $i\in I$ such that locally on each of the $U_{i}$, $D$ is given as the principal divisor associated to $f_{i}$ and moreover, $f_{i}/f_{j} $ lies in $\mathcal{O}^{*}\left( U_{i}\cap U_{j} \right)$ for all $i,j \in I$.\\
In order to define the minimal log discrepancy, we will need to work with $\mathbb{Q}$-divisors as well. A \emph{$\mathbb{Q}$-divisor} on a normal variety $X$ is a finite formal sum $\sum_{i\in I}^{} r_{i}D_{i}$, where the $D_{i}$ are prime divisors on $X$ and the coefficients $r_{i}$ are rational numbers.
\begin{definition}
  Let $X$ be a normal variety, we say that a Weil divisor $D$ on $X$ is \emph{$\mathbb{Q}$-Cartier} if there exists some integer $m > 0$ such that $m \cdot D$ is a Cartier divisor.
\end{definition}
\begin{remark}
  In contrast to an arbitrary Weil divisor, if $D$ is a $\mathbb{Q}$-Cartier divisor on $X$, then we can be sure that its pullback along any birational morphism into $X$ is well defined: let $m > 0$ be such that $m\cdot D$ is Cartier and let $f \colon Y \to X$ a birational morphism, then we define $f^{*}\left( D \right) := \frac{1}{m}f^{*}\left( mD \right)$.
\end{remark}
\begin{definition}
  Let $X$ be a normal variety, we say that $X$ is 
  \begin{enumerate}[(i)]
    \item \emph{$\mathbb{Q}$-Gorenstein} if its canonical divisor $K_{X}$ is $\mathbb{Q}$-Cartier,
    \item \emph{$\mathbb{Q}$-factorial} if every Weil divisor on $X$ is $\mathbb{Q}$-Cartier.
  \end{enumerate}
\end{definition}
\begin{definition}
  Let $f \colon Y \to X$ be a birational morphism between normal varieties $Y$ and $X$. The set of those points $y \in Y$ for which the inverse rational map $f^{-1} \colon X \dashrightarrow Y$ is not defined at $f\left( y \right)$ is called the \emph{exceptional locus of $f$} and denoted by $\ex \left( f \right)$.
\end{definition}
\begin{definition}
  We say that a divisor  $D = \sum_{i \in I} a_{i}D_{i}$ on a smooth variety $X$ has \emph{simple normal crossings (snc)} if all the prime divisors $D_{i}$ are distinct and smooth and moreover, at every point $x \in X$, the following equality holds:
  \[
    \codim \left( \bigcap_{x \in D_{i}}^{} T_{p}\left( D_{i} \right) \right) = | \left\{i \in I \mid p \in D_{i}\right\} | .
  \]
\end{definition}
\subsection{The minimal log discrepancy}\label{sec:mld}
In the study of singularities and log discrepancies, it can be useful to consider a log pair, which is a tuple $\left( X, \Delta \right)$ consisting of a variety $X$ and a $\mathbb{Q}$-divisor $\Delta$, subject to some conditions.
\begin{definition}
  Let $X$ be a normal variety and $\Delta = \sum_{i\in I}^{} a_{i}D_{i}$ a (not necessarily effective) $\mathbb{Q}$-divisor on $X$ such that $K_{X} + \Delta$ is $\mathbb{Q}$-Cartier. We call the tuple $\left( X, \Delta \right)$ a \emph{log pair}. If moreover $a_{i} \leq 1$ holds for all $i \in I$, we call $\Delta$ a \emph{boundary} divisor.
\end{definition}
For instance, if the variety $X$ is not $ \mathbb{Q} $-Gorenstein, a divisor $\Delta$ as above might be found so that $K_{X} + \Delta$ is $\mathbb{Q}$-Cartier, thus allowing the log discrepancy and related concepts to be defined for a wider class of varieties. If $\Delta $ is trivial, we will omit it to simplify the notation.
\begin{definition}\label{def:ld}
  Let $\left( X, \Delta \right) $ be a log pair. Let $Y$ be a normal variety, let $f \colon Y \to X$ be a proper birational morphism, and $E \subseteq Y$ a prime divisor. The \emph{log discrepancy} of $\left( X,\Delta \right)$ at $E$ is the rational number
  \[
    a_{E}\left( X,\Delta \right) := \coeff_{E}\left( K_{Y} - f^{*}\left( K_{X} + \Delta\right) \right) + 1,
  \]
  where $K_{Y}$ and $K_{X}$ denote a choice of canonical divisor on $Y$ resp. $X$ such that $f_{*} K_{Y} = K_{X}$ holds. We also call $E$ a \emph{divisor over $X$}.
\end{definition}
  Although it may look as if the log discrepancy depends on the choice of $Y$ and $f$, it is in fact only dependent on the divisor $E$. 
  To see why this is true, note that each prime divisor $E \subseteq Y$ as above induces a valuation $v_{E}$ on $\mathbb{C}\left( Y \right)$, the vanishing order along $E$, and that $a_{E} \left( X, \Delta \right)$ depends only on this valuation.
\begin{remark}
  Let the notation be as in Definition \ref{def:ld}. By the \emph{discrepancy of $E$ over $\left( X,\Delta\right)$}, we denote the rational number 
  \[
    \d_{E}\left( X,\Delta \right) := \coeff_{E}\left( K_{Y} - f^{*} \left( K_{X} +\Delta\right)\right),
  \]
  which is one less than the log discrepancy. We will use it mainly for technical purposes in proofs throughout this text.
\end{remark}
\begin{definition}
  Let $\left( X ,\Delta\right)$ be a log pair, where $\Delta = \sum_{i\in I}^{} a_{i}D_{i}$. We call $X$ \emph{log canonical} if $a_{E}\left( X, \Delta \right) \geq 0$ holds for every exceptional prime divisor $E$ over $X$. We call $X$ \emph{Kawamata log terminal} (or klt for short) if $a_{E}\left( X, \Delta \right) > 0$ holds for every exceptional prime divisor $E$ over $X$ and moreover, all $a_{i} < 1$.
\end{definition}
\begin{remark}
  In some texts, a log pair $\left( X, \Delta \right)$ is only called log canonical resp. klt if additionally, the divisor $\Delta$ is effective, and otherwise it is called sub log canonical resp. sub klt. We will not make this distinction here. Similarly, a boundary divisor might be required to be effective in some sources, and otherwise be called a sub boundary divisor, but we will not make this distinction either.
\end{remark}
One particular use case of boundary divisors is to keep track of log discrepancies as one performs birational modifications on a variety. Let $\left( X, \Delta \right)$ be a log canonical pair, where $X$ is not necessarily smooth, and let $ f \colon X' \to X$ be a resolution of $X$. By defining the \emph{discrepancy divisor} 
  \[
    \Delta' = \sum_{E}^{} -\d_{E}\left( X, \Delta \right)\cdot E ,
  \]
where the sum runs over all prime divisors in the support of $\ex \left( f \right)$, we obtain a boundary divisor $\Delta'$ on $X'$ such that $a_{E'}\left( X , \Delta \right) = a_{E'}\left( X', \Delta' \right)$ for any prime divisor $E'$ over $X$. \\
A useful tool for computing log discrepancies in concrete scenarios is given by Lemma 2.29 in \cite{KoMo1998BirGeom}, a special case of which we recall in the following example.
\begin{example}\label{blowup-formula}
  Let $\left( X, \Delta \right)$ be a log pair, where $\Delta = \sum_{i\in I}^{} a_{i}D_{i}$ such that the $D_{i}$ are distinct prime divisors. Let $Z \subseteq X$ be a smooth closed irreducible subset of codimension $k \geq 2$ and consider the blow-up $f \colon \tilde{X} \to X$ of $X$ in $Z$. Let $E$ be the exceptional divisor of the blow-up, which is always a prime divisor in this setting. Then the \emph{blow-up formula} for log discrepancies states that $a_{E} \left( X,\Delta \right) = k - \sum_{i \in I}^{}a_{i} $, where $I$ denotes the set of those $i$ for which $Z \subseteq D_{i}$ holds.
\end{example}
\begin{definition}
  Let $\left( X, \Delta \right)$ be a log pair and $W$ a nonempty closed subset of $X$. The \emph{minimal log discrepancy} of $\left( X, \Delta \right)$ at $W$ is the rational number
  \[
    \mld_{W}\left(X, \Delta \right) := \min \left\{a_{E}\left( X, \Delta \right) \mid f \colon E \subseteq Y \to X, c_{X}\left( E \right) \subseteq W \right\}.
  \]
  The minimum is taken over all $Y$ and $f$ as in Definition \ref{def:ld} above, with $E$ running over all prime divisors over $X$ that satisfy $c_{X} \left( E \right) \subseteq W$, where $c_{X}\left( E \right):= f \left( E \right)$ denotes the \emph{center} of the divisor $E$ on $X$.
\end{definition}
\begin{example}
  From the blow-up formula, it immediately follows that if $X$ is a smooth variety and $W \subseteq  X$ a smooth closed subset of codimension $k$, then $\mld_{W}\left( X \right) \leq k$, and in fact, equality holds too. 
\end{example}
\begin{definition}
  A \emph{log resolution} of a log pair $\left( X, \Delta \right)$ is a proper birational morphism $f \colon \tilde{X} \to X$, where $\tilde{X}$ is a smooth variety, satisfying the following properties:
  \begin{enumerate}[(i)]
    \item the smooth locus of $X$ is isomorphic to $\tilde{X}$,
    \item the exceptional locus $\ex \left( f \right)$ is a divisor on $\tilde{X}$, and
    \item $\ex \left( f \right) \cup f^{-1}\left( \supp \left( \Delta \right) \right)$ is an snc divisor. 
  \end{enumerate}
\end{definition}
When working with minimal log discrepancies, it is sometimes useful to restrict to an (affine) open subset of the ambient variety. This is justified by the fact that for any closed subset $W \subseteq X$, the minimal log discrepancy of $X$ at $W$ is invariant under the restriction to open subsets of $X$ containing $W$. This fact seems to be well-known among experts, however, we were unable to find a proof in the literature, which is why we provide one here.
\begin{lemma}\label{lem:opensubset}
  Let $X$ be a $ \mathbb{Q} $-Gorenstein variety, let $W \subseteq X$ be a closed subset and $U \subseteq X$ an open subset such that $W \cap U \ne \emptyset $. Then $\mld_{W}\left( X \right) = \mld_{W}\left( U \right)$.
\end{lemma}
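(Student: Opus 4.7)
The approach is to exploit the fact, noted in the remark after Definition \ref{def:ld}, that the log discrepancy $a_E(X)$ depends only on the divisorial valuation $v_E$ on $K(X)$, not on the particular birational model realizing it. Since $K(X) = K(U)$ and a divisor $E$ with $c_X(E) \subseteq W$ has center meeting $U$ (interpreting the hypothesis as $W \subseteq U$, which is what the paragraph preceding the lemma describes), divisorial valuations centered in $W$ correspond bijectively whether viewed from $X$ or from $U$. The plan is to prove the two inequalities separately by moving between birational models of $X$ and $U$.

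For $\mld_W(X) \geq \mld_W(U)$, I would start with a proper birational morphism $f \colon Y \to X$ and a prime divisor $E \subseteq Y$ with $c_X(E) \subseteq W \subseteq U$, and restrict to $Y' := f^{-1}(U)$. Since properness is stable under base change, $f|_{Y'} \colon Y' \to U$ is again proper and birational, and $E$ lies inside $Y'$ because its image is contained in $U$. The canonical divisor $K_U$ is just $K_X|_U$, and pullback of $\mathbb{Q}$-Cartier divisors commutes with restriction to open subsets, so the coefficient of $E$ in $K_{Y'} - (f|_{Y'})^*(K_U)$ agrees with its coefficient in $(K_Y - f^*(K_X))|_{Y'}$. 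This yields $a_E(U) = a_E(X)$, and taking the infimum over all admissible $E$ gives the first inequality.

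For the reverse inequality $\mld_W(X) \leq \mld_W(U)$, one must realize an arbitrary divisorial valuation $v_E$ on $K(U)$ with center in $W$ as a prime divisor on some proper birational model of $X$. Starting with $g \colon Z \to U$ proper birational containing $E$, I would apply Nagata compactification to extend $Z \to U \hookrightarrow X$ to a proper (possibly non-normal) birational model over $X$, then resolve singularities (available in characteristic zero) to obtain a smooth proper birational $\tilde{f} \colon \tilde{Y} \to X$ in which the strict transform of $E$ is a prime divisor. Its center in $X$ is $\overline{c_U(E)}^X$, which remains inside $W$ since $W$ is closed in $X$. Applying the restriction argument from the first direction in reverse, the two log discrepancies agree, and taking infima completes the proof.

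The main obstacle is the reverse direction: extending a model $Z \to U$ to one over $X$ without losing the divisor $E$. This is where Nagata compactification combined with resolution of singularities is needed. The first direction, by contrast, is essentially a compatibility check between restriction of canonical divisors and pullback, and should go through cleanly once the setup is in place.
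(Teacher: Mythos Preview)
Your proposal is correct and follows essentially the same approach as the paper: both directions match in spirit, and for the harder inequality you invoke Nagata compactification to extend a model over $U$ to a proper birational model over $X$, exactly as the paper does. The only cosmetic differences are that the paper uses a graph-closure construction in $X \times C$ (for a Nagata compactification $C \supseteq \tilde{U}$) followed by normalization, whereas you compactify directly over $X$ and then resolve singularities; both variants accomplish the same thing.
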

\begin{proof}
  For the inequality $\geq$, it is enough to observe that any exceptional prime divisor over $X$ with center contained in $W$ is also a prime divisor over $U$ whose center is contained in $W \cap U$.\\
  For the other inequality, let $f \colon \tilde{U} \to U$ be a log resolution that extracts an exceptional divisor $E$ with center contained in $W$. There is a complete variety $C$ in which we can embed $\tilde{U}$ as an open subset, by Theorem 4.3 in \cite{Nag1962Imb}. We define a morphism $\overline{f}\colon \tilde{U} \to X \times C$, $ u \mapsto \left( f \left( u \right), u \right)$ and let $\overline{X}$ be the closure of $\img \left( \overline{f} \right)$ in $X \times C$.\\
  Let $g \colon X_{n} \to \overline{X}$ be the normalization of $\overline{X}$, note that $g$ is a birational and proper morphism. Let $\iota$ be the closed embedding of $\overline{X}$ into $X \times C$ and $p_{X} \colon X \times C \to X$ the projection to $X$. Both of these morphisms are proper, and the composition $p_{X} \circ \iota$ is proper and birational. Thus the composition $p_{X} \circ i \circ g$ is a proper birational morphism as well. The following diagram sketches the situation.
  \[
\begin{tikzcd}
                 & E' \subseteq X_{n} \arrow[d, "g"]       &                      \\
  E \subseteq \tilde{U} \arrow[r, "\overline{f}"] & \overline{X} \arrow[r, "\iota", hook] & X \times C \arrow[d, "p_{X}"] \\
                 &                            & X                   
\end{tikzcd}
  \]
  Defining the divisor $E'$ on $X_{n}$ as the birational transform of $E$ under $g^{-1} \circ \overline{f}$, we get an exceptional prime divisor over $X$ with center contained in $W$ and $a_{E}\left( U \right) = a_{E'}\left( X \right)$, finishing the proof.
\end{proof}
\section{Shokurov's conjecture for general arrangement varieties}\label{sec:mld-bound}
The goal of this section is to prove the following theorem stating that Shokurov's conjecture holds for klt general arrangement varieties of arbitrary complexity.
\begin{theorem}\label{thm:main}
  Let $X$ be a klt $\mathbb{Q}$-Gorenstein general arrangement variety of complexity $d$, let $x \in X$ be a closed point. Then $\mld_{x}\left( X\right) \leq \dim \left( X \right)$.
\end{theorem}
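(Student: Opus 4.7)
The plan is to realise a neighbourhood of $x$ as a good quotient of a complete intersection by a diagonalisable group, using the Cox ring description of general arrangement varieties due to Hausen, Hische and Wrobel in \cite{HaHiWr2019OnTorusAct}, and to transfer the bound for local complete intersections of \cite{EiMu2004InversionOfAdj} across this quotient. By Lemma \ref{lem:opensubset}, we may freely replace $X$ by any open neighbourhood of $x$; combining this with the standard local structure theory of $T$-varieties, which in the language developed in the appendix corresponds to localising the defining polyhedral divisor at an appropriate face, I would first reduce to the case where $X$ is a general arrangement cone singularity with unique torus-fixed point $x$.

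In this situation, the structural results of \cite{HaHiWr2019OnTorusAct} produce an explicit finitely generated Cox ring $R(X)$ whose defining equations are trinomial-like, so that $Z := \Spec R(X)$ is a complete intersection of dimension $\dim X + \dim H$, where $H := \Spec \mathbb{C}[\Cl(X)]$ is the characteristic diagonalisable group, and the characteristic map $\pi \colon Z \to X$ realises $X = Z \sslash H$. Since $Z$ is a local complete intersection, the inversion of adjunction result of Ein and Musta\c{t}\u{a} in \cite{EiMu2004InversionOfAdj} gives $\mld_z(Z) \leq \dim Z$ at any closed point $z \in Z$; in particular this applies to any preimage $z \in \pi^{-1}(x)$.

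The heart of the argument is then to establish an inequality of the form
\[
  \mld_x(X) \;\leq\; \mld_z(Z) - \dim H
\]
for a suitably chosen preimage $z \in \pi^{-1}(x)$ (say one with finite $H$-isotropy), since combining it with the previous paragraph immediately yields $\mld_x(X) \leq \dim Z - \dim H = \dim X$. The geometric idea is that an exceptional divisor over $Z$ realising the minimum at $z$ can, after an equivariant modification, be made to descend along $\pi$ to an exceptional divisor over $X$ with centre $x$; the correction $-\dim H$ then encodes the dimension of the generic $H$-orbit on $Z$, or equivalently the relative canonical term of $\pi$. The klt assumption on $X$ is what ensures that the log discrepancies appearing in this descent remain nonnegative, so that the inequality is meaningful.

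The main obstacle will be the rigorous justification of this quotient inequality. Since the diagonalisable group $H$ decomposes as a product of a subtorus and a finite abelian group, the two factors behave rather differently: the torus quotient is closest in spirit to a toric downgrade and the discrepancy contribution of $\dim H$ admits a direct verification via $T$-equivariant log resolutions, whereas the finite quotient requires the more subtle analysis of log discrepancies under quotients by finite groups that is acknowledged as an input in the introduction. Running both analyses $T$-equivariantly, and matching them up with the explicit complete intersection presentation of $R(X)$, should produce a single divisor over $X$ witnessing the bound $\mld_x(X) \leq \dim X$.
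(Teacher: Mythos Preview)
Your overall strategy---pass to a cone singularity, realise it via its Cox ring as a quotient of a complete intersection by a quasitorus $H$, and transfer the Ein--Musta\c{t}\u{a} bound across the quotient---is the same as the paper's. Two substantive steps are missing from your outline, however, and your account of where the klt hypothesis enters is off.

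First, the reduction to a cone singularity cannot always be achieved by localising the polyhedral divisor at a face. When the affine $T$-invariant neighbourhood $U$ of $x$ has non-constant invariant regular functions, $U$ neither is nor equivariantly embeds into a cone singularity of the same dimension; the paper (Lemma~\ref{lem:not_cone}) shows that in this case $U$ is toroidal and invokes the toric case of Shokurov's conjecture \cite{Bor1993TorMinDisc} together with analytic invariance of mld. Your outline skips this genuine case distinction. Even when one does land in a cone singularity, $x$ need not be the fixed point, so the argument has to work for arbitrary closed $x$, as in Theorem~\ref{thm:cone-sing}.

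Second, and more seriously for the Cox-ring step, you omit the passage to a small $\mathbb{Q}$-factorialisation. The good quotient $\hat{Y} \sslash H$ coming from the Cox construction is known to be \emph{geometric}, with finite isotropy everywhere and trivial generic isotropy along every prime divisor, only once the base is $\mathbb{Q}$-factorial (see the results from \cite{ArDeHaLa2015CoxRings} invoked in the proof of Theorem~\ref{thm:cone-sing}). Without this, your proposed descent of exceptional divisors along $\pi$ has no footing: fibres over closed points can fail to be single closed $H$-orbits of dimension $\dim H$, so the inequality $\mld_x(X) \leq \mld_z(Z) - \dim H$ at a chosen $z$ is not even well-posed. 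The paper inserts exactly this step (Lemma~\ref{lem:small-Q-fact}), and \emph{this} is where the klt hypothesis is actually used---it forces the $\mathbb{Q}$-factorialisation to be small and hence mld-preserving---not, as you suggest, to keep log discrepancies nonnegative during the descent.

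With these two repairs in place, your formulation of the quotient inequality (at a closed point $z$, with a $-\dim H$ correction) and the paper's (Proposition~\ref{prop:quotient}, at an irreducible component $C$ of the fibre, yielding $\mld_x \leq \mld_C \leq \codim C$) are essentially equivalent, and both are proved by factoring $H$ into its finite and torus parts as you indicate.
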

The occasional argument in the proof of this theorem uses the language of polyhedral divisors for $T$-varieties, but the bulk of the proof can be understood without knowing anything about polyhedral divisors. Thus for the sake of smoother reading, they are only introduced in Appendix \ref{sec:p-divs}, where the interested reader can read up on details.
\subsection{Minimal log discrepancy under group quotients}
The proof of Theorem \ref{thm:main} uses, among others, the statement that if $Z$ is a variety and $X $ is a geometric quotient of $Z$ by a quasitorus $H$, satisfying some conditions, then roughly speaking, the minimal log discrepancy on $X$ is bounded from above by the minimal log discrepancy on $Z$. By a quasitorus, we denote a group which is a direct product of a torus and a finite abelian group.
\begin{proposition}\label{prop:quotient}
  Let $Z$ be a variety and $H$ a quasitorus acting on $Z$ such that the isotropy group of every point $z\in Z$ is finite and the generic isotropy group of every prime divisor $E \subseteq Z$ is trivial. Suppose that the quotient $q \colon Z \to Z \sslash H$ is geometric and let $X = Z/H$. Let $x \in X$ be an arbitrary point and $C$ an irreducible component of $q^{-1}\left( x \right)$, then $\mld_{x} \left( X \right) $ is bounded from above by $\mld_{C}\left( Z \right)$.
\end{proposition}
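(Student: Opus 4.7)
The plan is to transfer a near-minimizing prime divisor $E$ for $\mld_{C}(Z)$ through an $H$-equivariant resolution, push it down to a prime divisor $F$ over $X$ with center $\{x\}$, and compare the two log discrepancies using the ramification formula for the induced quotient map.

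First, fix $\varepsilon > 0$ and pick a prime divisor $E$ over $Z$ with $c_{Z}(E) \subseteq C$ and $a_{E}(Z) \leq \mld_{C}(Z) + \varepsilon$. Using $H$-equivariant resolution of singularities (applied to a birational model extracting $E$, and if necessary passing to an $H$-translate or an $H$-orbit of $E$, which does not change the log discrepancy), we may realize $E$ as an $H$-invariant prime divisor on a smooth $H$-equivariant proper birational model $f \colon \tilde{Z} \to Z$. Stabilizers only shrink under equivariant maps, so the $H$-action on $\tilde{Z}$ still has finite stabilizers; this plus reductivity of the quasitorus $H$ yields a geometric quotient $\tilde{q} \colon \tilde{Z} \to \tilde{X} := \tilde{Z}/H$ together with an induced birational morphism $\pi \colon \tilde{X} \to X$ satisfying $q \circ f = \pi \circ \tilde{q}$. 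The prime divisor $F := \tilde{q}(E) \subseteq \tilde{X}$ then satisfies $c_{X}(F) = \pi(F) \subseteq q(C) = \{x\}$.

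Next, let $e \geq 1$ denote the ramification index of $\tilde{q}$ along $E$, equal to the order of the generic isotropy of $H$ on $E$. The hypotheses imply that $q$ is \'etale in codimension one, hence $K_{Z} = q^{*}K_{X}$, and consequently $f^{*}K_{Z} = \tilde{q}^{*}\pi^{*}K_{X}$. Combining this with the ramification formula $K_{\tilde{Z}} = \tilde{q}^{*}K_{\tilde{X}} + \sum_{D}(e_{D}-1)D$ and with the identity $\tilde{q}^{*}F = e\cdot E$ (which holds because $E$ is $H$-invariant and $\tilde{q}$ is quasi-finite), a direct computation of coefficients along $E$ yields
\[
  a_{E}(Z) \;=\; \coeff_{E}(K_{\tilde{Z}} - f^{*}K_{Z}) + 1 \;=\; e \cdot \bigl( \coeff_{F}(K_{\tilde{X}} - \pi^{*}K_{X}) + 1 \bigr) \;=\; e \cdot a_{F}(X).
\]
In the log canonical range one has $a_{E}(Z) \geq 0$ and $e \geq 1$, forcing $a_{F}(X) \leq a_{E}(Z)$; outside this range $a_{F}(X) < 0$, which by the standard convention yields $\mld_{x}(X) = -\infty = \mld_{C}(Z)$. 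Thus $\mld_{x}(X) \leq a_{F}(X) \leq \mld_{C}(Z) + \varepsilon$ in all cases, and letting $\varepsilon \to 0$ proves the claim.

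The main obstacle lies in the construction of the first paragraph, namely producing an $H$-equivariant smooth model $\tilde{Z}$ on which $E$ is realized as an $H$-invariant prime divisor together with the associated geometric quotient $\tilde{X} = \tilde{Z}/H$ and the birational map $\pi \colon \tilde{X} \to X$. This requires combining equivariant resolution of singularities (available for reductive group actions) with GIT-style quotient existence for quasitorus actions with finite stabilizers. Once these ingredients are in place, the comparison of log discrepancies reduces to the short ramification computation above, and the desired inequality follows immediately from the fact that ramification indices are positive integers.
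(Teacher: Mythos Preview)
Your route is genuinely different from the paper's. The paper never passes to an equivariant resolution of $Z$; instead it shrinks to an $H$-invariant neighbourhood $U_{0}$ on which the group generated by all stabilisers is a finite group $G_{1}$, and then factors the quotient as
\[
  U_{0} \xrightarrow{/G_{1}} U_{1} \xrightarrow{/G_{2}} U_{2} \xrightarrow{/T} U_{3}\cong U,
\]
with $G_{1},G_{2}$ finite and $T$ a torus acting \emph{freely} on $U_{2}$. Two short lemmas then do all the work: Masek's inequality for finite morphisms \'etale in codimension one handles the $G_{i}$-steps, and for the free torus step the quotient is a Zariski-locally trivial $T$-bundle, so the product formula for $\mld$ gives equality. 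No equivariant resolution, no quotient of a resolution, and no Hurwitz formula are needed.

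Your argument, by contrast, hinges on two points that are not settled. First, the existence of the geometric quotient $\tilde X=\tilde Z/H$ together with a \emph{proper} induced morphism $\pi\colon\tilde X\to X$: finite stabilisers plus reductivity do not by themselves produce a separated geometric quotient (already $\mathbb{C}^{*}$ acting on $\mathbb{A}^{2}\setminus\{0\}$ by weights $(1,-1)$ has finite stabilisers but a non-separated orbit space), so one really needs a relative GIT argument over $X$, and then properness of $\pi$ must be checked. You flag this, but it is exactly the step the paper's decomposition is designed to avoid. Second, the ramification formula $K_{\tilde Z}=\tilde q^{*}K_{\tilde X}+\sum_{D}(e_{D}-1)D$ is the Riemann--Hurwitz identity for generically finite morphisms; here $\tilde q$ has positive-dimensional fibres, so this equality of divisors is not available as written (for instance, with $\tilde Z=\mathbb{C}^{*}\times\mathbb{A}^{1}$, $t\cdot(x,y)=(t^{e}x,ty)$, both $K_{\tilde Z}$ and $\tilde q^{*}K_{\tilde X}$ are zero while $(e-1)D\neq 0$). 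The identity $a_{E}(Z)=e\cdot a_{F}(X)$ you want is nonetheless true, but justifying it essentially forces you back to a finite/torus factorisation---which is precisely the paper's proof.
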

Before the proof of this proposition, we state two lemmas relating the minimal log discrepancy of a geometric quotient $Z/ H$ to the minimal log discrepancy of $Z$ in the special cases where the group $H$ is either finite or a torus and the action satisfies some technical conditions.\\
The following Lemma \ref{lem:Masek} is a slight generalisation of Lemma 5 in \cite{Mas2001MinDis} replacing the closed points $y\in Y$ and $y' \in Y'$ by closed subsets $C \subseteq Y$ and $C' \subseteq Y'$. The proof is analogous to the proof of the original statement. 
 \begin{lemma}\label{lem:Masek}
  Let $\varphi  \colon Y' \to Y$ be a finite morphism of normal $ \mathbb{Q} $-Gorenstein  varieties. Assume that $\varphi$ is \'etale in codimension one. Let $C$ be a closed irreducible subset of $Y$ and $C'$ an irreducible component of $\varphi^{-1} \left( C \right)$. Then $\mld_{C}\left( Y \right) \leq \mld_{C'}\left( Y' \right)$.
\end{lemma}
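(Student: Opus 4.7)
The plan is to adapt the classical Mašek-type comparison of log discrepancies across a quasi-étale cover, with the only new wrinkle being to track centers of subsets rather than of points. Fix $\varepsilon > 0$ and pick a prime divisor $E'$ over $Y'$ with $c_{Y'}(E') \subseteq C'$ and $a_{E'}(Y') < \mld_{C'}(Y') + \varepsilon$. The goal is to produce from $E'$ a prime divisor $E$ over $Y$ with center contained in $C$ and satisfying $a_E(Y) \leq a_{E'}(Y')$.

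To construct $E$, I would realize $E'$ on a log resolution $g' \colon \tilde{Y}' \to Y'$, choose a log resolution $g \colon \tilde{Y} \to Y$ together with a finite morphism $\tilde{\varphi} \colon \tilde{Y}' \to \tilde{Y}$ fitting into a commutative square with $\varphi$ (by taking the normalization of the base change of $g$ along $\varphi$ and, if needed, further resolving both sides simultaneously). Let $E := \tilde{\varphi}(E')$, and let $r$ denote the ramification index of $\tilde{\varphi}$ along $E'$. Concretely, $E$ corresponds to the restriction of the divisorial valuation $v_{E'}$ on $\mathbb{C}(Y')$ to $\mathbb{C}(Y)$.

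The key computation uses two inputs: since $\varphi$ is étale in codimension one and both varieties are $\mathbb{Q}$-Gorenstein, $K_{Y'} = \varphi^{*}K_Y$; and the ramification formula for $\tilde{\varphi}$ contributes a divisor with coefficient $r-1$ along $E'$. Combining these gives
\[
  a_{E'}(Y') \;=\; \coeff_{E'}\bigl(\tilde{\varphi}^{*}(K_{\tilde{Y}} - g^{*}K_Y)\bigr) + (r-1) + 1 \;=\; r \cdot (a_E(Y) - 1) + r \;=\; r \cdot a_E(Y),
\]
so $a_E(Y) = a_{E'}(Y')/r \leq a_{E'}(Y')$ as $r \geq 1$.

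For the center, the finiteness of $\varphi$ implies $\dim C' = \dim C$, and since $C'$ is an irreducible component of $\varphi^{-1}(C)$ with $C$ irreducible, $\varphi(C') = C$. Hence $c_Y(E) = \varphi(c_{Y'}(E')) \subseteq \varphi(C') = C$, so $E$ is admissible for computing $\mld_C(Y)$. Combining, $\mld_C(Y) \leq a_E(Y) \leq a_{E'}(Y') < \mld_{C'}(Y') + \varepsilon$; letting $\varepsilon \to 0$ yields the lemma. The only nontrivial step is the construction of the compatible diagram $\tilde{\varphi}$ on which the Riemann--Hurwitz-style coefficient computation takes place, but this is standard given that $\varphi$ is finite and étale in codimension one; everything else reduces to the arithmetic of coefficients under pullback.
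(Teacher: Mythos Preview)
Your argument is essentially the one the paper defers to (Ma\v{s}ek's Lemma~5), with the center-tracking for subsets spelled out exactly as needed; the paper gives no proof of its own beyond noting this analogy.

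One small correction: from $a_E(Y) = a_{E'}(Y')/r$ you deduce $a_E(Y) \leq a_{E'}(Y')$ ``as $r \geq 1$'', but this requires $a_{E'}(Y') \geq 0$. When $a_{E'}(Y') < 0$ the inequality reverses; in that case argue instead that $a_E(Y) = a_{E'}(Y')/r < 0$, whence $Y$ is not log canonical along $C$ and $\mld_C(Y) = -\infty$, so the conclusion holds trivially. A second, minor point: the commutative square is more cleanly built from the $Y$-side---choose $g \colon \tilde Y \to Y$ so that the restriction of $v_{E'}$ to $\mathbb{C}(Y)$ has divisorial center $E$, then take $\tilde Y'$ to be the normalization of $\tilde Y \times_Y Y'$---since starting from a fixed smooth $\tilde Y'$ one cannot in general arrange a \emph{finite} map to a smooth $\tilde Y$ over $Y$. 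Neither point affects the overall strategy.
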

\begin{lemma}\label{lem:torus-q}
  Let $Z$ be a variety and $T$ a torus acting on $Z$ with trivial stabilizers and such that $q \colon Z \to X$ is a geometric quotient. Then the minimal log discrepancy satisfies $\mld_{x} ( X ) = \mld_{q^{-1}\left( x \right)} ( Z )$ 
  for all $x\in X$.
\end{lemma}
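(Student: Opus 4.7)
The plan is to use the hypothesis to present $q \colon Z \to X$ as a principal $T$-bundle and then transfer log discrepancies between $Z$ and $X$ through this structure. First I would observe that a free torus action together with the geometric quotient property forces $q$ to be faithfully flat, smooth of relative dimension $\dim T$, and \'etale-locally trivial. Because $T$ is a torus, its cotangent space at the identity is canonically trivial, so the relative cotangent sheaf $\Omega^{1}_{Z/X}$ is a trivial vector bundle and $K_{Z/X} = 0$; thus $K_{Z} = q^{*}K_{X}$, and $Z$ inherits $\mathbb{Q}$-Gorenstein (and klt) from $X$.

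For the inequality $\mld_{q^{-1}(x)}(Z) \leq \mld_{x}(X)$ I would pull back. Given any proper birational morphism $f \colon Y \to X$ from a normal variety and an exceptional prime divisor $E$ on $Y$ with $c_{X}(E) \subseteq \{x\}$, I form the base change $Y_{Z} := Y \times_{X} Z$ with projections $f_{Z} \colon Y_{Z} \to Z$ and $q_{Y} \colon Y_{Z} \to Y$. Since $q_{Y}$ is again a principal $T$-bundle, $Y_{Z}$ is normal, $f_{Z}$ is proper and birational, and $\tilde E := q_{Y}^{-1}(E)$ is an irreducible prime divisor (the fibers of $q_{Y}$ being irreducible tori). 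The identity $K_{Y_{Z}} - f_{Z}^{*}K_{Z} = q_{Y}^{*}(K_{Y} - f^{*}K_{X})$, which comes from $K_{Y_{Z}/Y} = K_{Z/X} = 0$, yields $a_{\tilde E}(Z) = a_{E}(X)$, and $c_{Z}(\tilde E) = q^{-1}(c_{X}(E)) \subseteq q^{-1}(x)$. Taking the infimum over such $E$ gives the desired inequality.

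For the reverse inequality $\mld_{x}(X) \leq \mld_{q^{-1}(x)}(Z)$, I would choose a $T$-equivariant log resolution $g \colon \tilde Z \to Z$, which exists by equivariant resolution of singularities for $T$-varieties. The lifted $T$-action on $\tilde Z$ still has trivial stabilizers (stabilizers only shrink along $T$-equivariant morphisms), so the geometric quotient $\tilde q \colon \tilde Z \to \tilde X := \tilde Z/T$ exists as a principal $T$-bundle, and in particular $\tilde X$ is smooth. The universal property of the good quotient $q$ supplies an induced proper birational morphism $\tilde f \colon \tilde X \to X$. Pullback along $\tilde q$ then puts prime exceptional divisors $E$ of $\tilde f$ in bijection with $T$-invariant prime exceptional divisors of $g$, and the analogous relation $K_{\tilde Z / \tilde X} = 0$ preserves log discrepancies and centers along this bijection. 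Since $X$ is klt in the setting of Theorem \ref{thm:main} (and hence $Z$ is klt as well), $\mld_{q^{-1}(x)}(Z)$ is realized by some exceptional divisor of $g$, which is automatically $T$-invariant and therefore descends to an exceptional divisor of $\tilde f$ over $x$ with the same log discrepancy.

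The hard part will be the two technical inputs to the second direction: the existence of a $T$-equivariant log resolution of $Z$, and the realization of $\mld_{q^{-1}(x)}(Z)$ by a divisor actually extracted on such a log resolution rather than only approximated by further blow-ups. The former is a standard application of equivariant resolution of singularities for varieties with torus action. The latter rests on the general fact that for (sub) log canonical pairs the infimum defining the minimal log discrepancy is attained on any log resolution, which is available to us in the klt setting in which the lemma is applied.
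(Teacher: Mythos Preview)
Your approach is essentially correct but takes a genuinely different and considerably longer route than the paper's. The paper's proof is three lines: since a principal $T$-bundle for a \emph{torus} $T$ is Zariski-locally trivial (not merely \'etale-locally trivial; this is the special feature of tori the paper exploits, citing Dr\'ezet), one may, after replacing $X$ by an open neighborhood of $x$ (which is harmless by Lemma~\ref{lem:opensubset}), assume $Z \cong X \times T$ equivariantly. Then Ambro's product formula for minimal log discrepancies gives $\mld_{(x,T)}(X \times T) = \mld_{x}(X) + \mld_{T}(T) = \mld_{x}(X)$, since $T$ is smooth and $\mld_{T}(T)=0$.

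Your argument instead compares divisors one at a time: pullback along the bundle for one inequality, and $T$-equivariant resolution followed by descent to the quotient for the other. This works, but note two points. First, your reverse inequality relies on the mld being realized by a divisor on a fixed log resolution, which you justify by importing the klt hypothesis from Theorem~\ref{thm:main}; the lemma as stated has no such hypothesis, so strictly speaking you should also dispose of the non--log-canonical case (both sides are $-\infty$, so this is easy). The paper's use of the product formula sidesteps this entirely. Second, your claim that an exceptional divisor of a $T$-equivariant resolution is ``automatically $T$-invariant'' is correct but deserves a word: it holds because $T$ is connected, so the orbit of a prime divisor under $T$ is a connected set of prime divisors, hence a single divisor. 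In summary, your argument buys a more hands-on understanding of how discrepancies transfer through the bundle, while the paper's buys brevity and avoids any singularity hypothesis by reducing to a known product formula.
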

\begin{proof}
  Since the torus acts without stabilizers, by Proposition 5.7 in \cite{Dre2004LunaSlice}, $Z \to Z/T$ is a principal $T$-bundle and thus also a Zariski-locally trivial bundle, hence after restricting to an open subset if necessary, we may assume that $Z \cong X \times T$ by $T$-equivariant isomorphism. We let $x \in X$ and identify $q^{-1}\left( x \right)\in Z/T$ with the pair $\left( x,T \right)$ under this isomorphism. By the product formula for minimal log discrepancies, see Remark 2.7 in \cite{Am1999OnMLD}, we get $\mld_{\left( x,T \right)}( Z ) = \mld_{x}\left( X \right) + \mld_{T}\left( T \right) $, and the right-hand-side of this is equal to $\mld_{x}\left( X \right) $, since the torus $T$ being smooth implies that $\mld_{T}\left( T \right) = 0$.
\end{proof}
\begin{proof}[Proof of Proposition \ref{prop:quotient}] 
  We fix a point $x\in X$ and consider its inverse image $q^{-1}\left( x \right)\subseteq Z$ under the quotient morphism $q$. Because the quotient is geometric, $q^{-1}\left( x \right)$ is equal to a closed orbit $H.z$ for some $z\in Z$. It follows from Proposition 5.5 in \cite{Dre2004LunaSlice} that there exists an $H$-invariant open subset $U_{0} \subseteq Z$ containing $H.z$ and such $G_{1}$, the subgroup of $H$ generated by all stabilizers of points in $U_{0}$, is finite. Since $\mld_{x}$ is not affected by restricting to an open neighborhood of $x$, we will consider $\mld_{x}\left( U_{0} \right)$ for the rest of the proof.\\
 We consider the quotient $U_{1}:=U_{0} / G_{1} $ and define an action of the factor group $H/G_{1}$ on $U_{1}$ by specifying $\overline{h}. \left( G_{1}.u \right) := G_{1}.\left( h.u \right)$, where $\overline{h} = hG_{1}$ and $u_{}\in U_{0} $. 
  It is straightforward to check that this is well-defined and that moreover,
  \[
    \mathbb{C}[ U_{0} ]^{H}\cong \big(\mathbb{C}[ U_{0} ]  ^{G_{1}}\big)^{H/G_{1}}
  \]
  holds for the respective invariant rings. Thus $U = U_{0}/H$ is isomorphic to the quotient $U_{1} / \left( H/G_{1} \right)$.\\
  The quotient group $H' = H/G_{1}$ is isomorphic to a product $T \times G_{2}$, where $T$ is a torus and $G_{2}$ a finite abelian group. We apply the same steps as above, considering quotients $U_{2} := U_{1}/G_{2}$ and $U_{3} := U_{2}/T $, noting that $T \cong H'/G_{2}$ acts on $U_{2}$ with trivial stabilizer. By the same arguments as above, $\left( U_{1}/H' \right) \cong  \left( U_{1} /G_{2}\right)/T$, and the left-hand-side of this is isomorphic to $U_{0}/H$.\\
  We denote the respective quotient morphisms by
  \[
      U_{0} \xrightarrow{q_{1}} U_{1} \xrightarrow{q_{2}} U_{2} \xrightarrow{q_{3}} U_{3}
  \]
  and let 
  \begin{itemize}
    \item $C_{2} \subseteq U_{2}$ be an irreducible component of $q_{3}^{-1}\left( x \right)$, 
    \item $C_{1} \subseteq U_{1}$ an irreducible component of $q_{2}^{-1}\left( C_{2} \right)$, 
    \item $C_{0} \subseteq U_{0}$ an irreducible component of $q_{1}^{-1}\left( C_{1} \right)$.
  \end{itemize}
  By Lemma \ref{lem:torus-q}, $\mld_{x}\left( U \right) \leq \mld_{C_{2}}\left( U_{2} \right)$. Since the action of $H$ on $Z$ has trivial generic stabilizer along prime divisors, the same is true for the subgroups $G_{1}$ and $G_{2}$, hence the quotient morphisms $q_{1}$ and $q_{2}$ are finite and \'etale in codimension one. We can apply Lemma \ref{lem:Masek} twice and get $\mld_{C_{2}}\left( U_{2} \right) \leq \mld_{C_{1}}\left( U_{1} \right) \leq \mld_{C_{0}}\left( U_{0} \right)$. Observing that $C_{0}$ is an irreducible component of $q^{-1}\left( x \right)$, the proof is done here.
\end{proof}
\subsection{Proof of the main result}
As an intermediate step in proving the main result, we prove that Shokurov's conjecture holds for general arrangement varieties that are also cone singularities.
\begin{theorem}\label{thm:cone-sing}
  Let $X$ be a klt general arrangement variety of complexity $d$ and assume that $X$ is moreover a cone singularity. Let $x \in X$ be a closed point, then $\mld_{x}\left( X \right) \leq \dim \left( X \right)$.
\end{theorem}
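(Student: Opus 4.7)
The plan is to combine the Cox ring presentation of general arrangement varieties with Shokurov's conjecture for local complete intersections. By \cite{HaHiWr2019OnTorusAct}, the Cox ring $\mathcal{R}(X)$ of a general arrangement variety is a complete intersection; setting $Z := \Spec \mathcal{R}(X)$ and writing $H := \Spec \mathbb{C}[\Cl(X)]$ for the characteristic quasitorus, one obtains a good quotient $q \colon Z \to Z \sslash H = X$. On the open subset $\hat{Z} \subseteq Z$ of stable points, $H$ acts with finite stabilizers everywhere, and the Cox-ring construction guarantees that the generic isotropy along every $H$-invariant prime divisor is trivial, so the hypotheses of Proposition \ref{prop:quotient} are satisfied on $\hat{Z}$.

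For a closed point $x \in X$ whose preimage meets $\hat{Z}$, let $C$ be an irreducible component of $q^{-1}(x) \cap \hat{Z}$, which is an $H$-orbit of dimension $\dim H = \dim Z - \dim X$. Proposition \ref{prop:quotient} combined with Lemma \ref{lem:opensubset} gives $\mld_x(X) \leq \mld_C(Z)$. To bound the right-hand side by $\dim X$, I would apply Luna's \'etale slice theorem at a point $z \in C$ with stabilizer $F := \Stab_H(z)$, producing an $F$-invariant transverse slice $S \subseteq Z$ such that the induced map $S/F \to X$ is \'etale onto a neighborhood of $x$. Taking $S$ to be the intersection of $Z$ with an $F$-invariant affine subspace of the ambient affine space transverse to $H.z$ realizes $S$ itself as a complete intersection of dimension $\dim X$. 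Since minimal log discrepancies are preserved under \'etale maps, and Lemma \ref{lem:Masek} handles the finite quotient $S \to S/F$, one obtains $\mld_x(X) \leq \mld_z(S)$; then Shokurov's conjecture for local complete intersections, a consequence of the theorem of Ein-Musta\c{t}\u{a} in \cite{EiMu2004InversionOfAdj}, yields $\mld_z(S) \leq \dim S = \dim X$.

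The main obstacle is the cone vertex $x_0$ itself: the $T$-action on $X$ has positive weights and lifts to one on $Z$ with positive weights, so $q^{-1}(x_0)$ is the origin of $Z$, which is fixed by all of $H$, and neither Proposition \ref{prop:quotient} nor the slice argument applies directly there. I would resolve this by invoking the polyhedral divisor description of $X$ developed in the appendix to carry out a $T$-equivariant computation of $\mld_{x_0}(X)$ from the combinatorial data, or alternatively by performing an $H$-equivariant blow-up of the origin in $Z$ so that the problematic fibre is replaced by a positive-dimensional exceptional locus amenable to the slice argument, and then transferring the resulting bound back to $x_0$ through the induced partial resolution of $X$.
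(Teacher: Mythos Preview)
Your overall strategy---Cox ring presentation plus Proposition~\ref{prop:quotient} plus Ein--Musta\c{t}\u{a}---matches the paper's, but you miss the key structural step and then misdiagnose the resulting obstruction. The Cox quotient $q \colon \hat{Z} \to X$ is a \emph{good} quotient in general, but it is \emph{geometric} with everywhere finite $H$-isotropy only when $X$ is $\mathbb{Q}$-factorial (Corollaries~1.6.2.7 and~1.3.4.8 in \cite{ArDeHaLa2015CoxRings}); without this, Proposition~\ref{prop:quotient} does not apply over all of $X$. The paper's fix is to first pass to a small $T$-equivariant $\mathbb{Q}$-factorialization $g \colon Y \to X$ (Lemma~\ref{lem:small-Q-fact}); since $g$ is small it preserves both $\mld$ and the class group, hence the Cox ring, and one then runs the Cox-ring argument on $Y$ instead of $X$.

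Your worry about the cone vertex $x_{0}$ is a symptom of this missing step rather than a separate phenomenon, and it conflates the acting torus $T$ with the characteristic quasitorus $H$. What actually fails at the origin of $Z$ is that $H$ fixes it whenever $\Cl(X)$ has a free part---i.e.\ exactly when $X$ is not $\mathbb{Q}$-factorial (on an affine $\mathbb{Q}$-factorial variety every Weil divisor has a principal multiple, so $\Cl(X)$ is torsion and $H$ is finite). After $\mathbb{Q}$-factorializing, $Y$ need no longer be affine, the origin is excluded from $\hat{Y}$, and the argument is uniform over all points of $Y$: no special treatment of $x_{0}$, no polyhedral computation, no $H$-equivariant blow-up. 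The Luna slice detour is likewise unnecessary: once Proposition~\ref{prop:quotient} gives $\mld_{y}(Y) \leq \mld_{W}(\hat{Y})$, Ein--Musta\c{t}\u{a}'s result for local complete intersections already yields $\mld_{W}(\hat{Y}) \leq \codim_{\hat{Y}}(W) = \dim X$ directly, since $W$ is an $H$-orbit with finite stabilizer.
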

\begin{proof}
  Because $X$ is a cone singularity, the only invertible global sections on $X$ are the constant ones. Since $X$ is a general arrangement variety, it follows from Corollary 3.15 in \cite{PeSu2011TorInvDiv} that the class group of $X$ is finitely generated.\\
  If $X$ is not $ \mathbb{Q} $-factorial, let $g \colon Y \to X$ be the small, $T$-equivariant $ \mathbb{Q} $-factorialization from Lemma \ref{lem:small-Q-fact}, with $Y$ a $T$-variety of the same dimension and complexity as $X$. If $X$ is $ \mathbb{Q} $-factorial, replace $Y$ by $X$ in the following steps.\\
  Since $g$ is small and birational, $\mld_{x}\left( X \right) = \mld_{y}\left( Y \right)$ holds for every $y \in g^{-1}\left( x \right)$ and the divisor class groups $\Cl\left( X \right)$ and $\Cl\left( Y \right)$ are isomorphic.\\
  Applying Construction 1.6.1.3 in \cite{ArDeHaLa2015CoxRings} to $R$ and $Y$, we let $R$ denote the Cox ring of $Y$ and set $\overline{Y}=\Spec R$. Due to the isomorphic class groups, the Cox ring of $Y$ is isomorphic to the Cox ring of $X$. Since $X$ is a general arrangement variety, the Cox ring of $X$ is a complete intersection ring by Theorem 6.5 in \cite{HaHiWr2019OnTorusAct} and then so is $R$.\\
  Let $\hat{Y} \subseteq \overline{Y}$ be an open subset such that $Y$ is isomorphic to the good quotient $\hat{Y}  \sslash H$. The existence of $\hat{Y}$ is guaranteed by the theory of Cox rings, for details on this please consider Section 1.6 of \cite{ArDeHaLa2015CoxRings}.\\
  Since $Y$ is $ \mathbb{Q} $-factorial, Corollary 1.6.2.7 in \cite{ArDeHaLa2015CoxRings} implies that the quotient $\hat{Y} \sslash H$ is geometric and moreover, according to Corollary 1.3.4.8 in \cite{ArDeHaLa2015CoxRings}, $H$ acts with at most finite isotropy groups on $\hat{Y}$. By Proposition 1.6.1.6 in \cite{ArDeHaLa2015CoxRings}, the action of $H$ on $\hat{Y}$ has trivial generic isotropy group on every prime divisor in $\hat{Y}$.\\
  Thus we can apply Proposition \ref{prop:quotient} to $\hat{Y}$, $H$ and an arbitrary point $y\in g^{-1}\left( x \right)\subseteq Y$, to obtain that $\mld_{y}\left( Y \right) \leq \mld_{W}(\hat{Y}) $ holds for any irreducible component $W$ of $q^{-1}\left( y \right) \subseteq \hat{Y}$, where $q \colon \hat{Y} \to Y$ is the quotient morphism. Since $R$ is a complete intersection, the right-hand-side of this is bounded from above by $\codim_{\hat{Y}}\left( W \right)$ by Theorem 1.2 in \cite{EiMu2004InversionOfAdj}, and $\codim_{\hat{Y}}\left( W \right) \leq \dim \left( Y \right) = \dim \left( X \right)$, thus finishing the proof. 
\end{proof}
\begin{lemma}\label{lem:small-Q-fact}
Let $X$ be a klt $\mathbb{Q}$-Gorenstein $T$-variety of complexity $d$. Then there exists a $\mathbb{Q}$-factorial $T$-variety $Y$ of the same complexity and dimension as $X$, and a small $T$-equivariant contraction $g \colon Y \to X$. In particular, the equality $\mld_{x}\left( X \right) = \mld_{y}\left( Y \right)$ holds for every $y \in g^{-1}\left( x \right)$.
\end{lemma}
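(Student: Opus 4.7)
The plan is to produce $g \colon Y \to X$ in two stages — first, a (not yet equivariant) small $\mathbb{Q}$-factorialization from the Minimal Model Program, then an upgrade to $T$-equivariance — and to derive the stated equality of minimal log discrepancies from the smallness of $g$.

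For the existence, I would invoke Corollary 1.4.3 of Birkar-Cascini-Hacon-McKernan applied to the klt pair $(X, 0)$ to produce a small birational morphism $g_{0} \colon Y_{0} \to X$ with $Y_{0}$ normal and $\mathbb{Q}$-factorial. Since $g_{0}$ is birational, $\dim Y_{0} = \dim X$. For the $T$-equivariance, I would use that the isomorphism classes of small $\mathbb{Q}$-factorializations of $X$ are parameterized by the finite set of maximal chambers in the movable cone of $X$ (in the sense of Hu-Keel). The connected torus $T$ acts on this finite discrete set by permutations, hence trivially, so each chamber is $T$-invariant and the associated small $\mathbb{Q}$-factorial model $Y$ carries a natural $T$-action lifting that on $X$, turning $g$ into a $T$-equivariant morphism; the complexity of $Y$ then equals that of $X$ since $g$ is birational. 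Alternatively, and more directly aligned with the arguments used in Theorem \ref{thm:cone-sing}, one may invoke the characteristic space construction of Section 1.6 of \cite{ArDeHaLa2015CoxRings}: picking a full-dimensional GIT chamber in the GIT fan of the characteristic space whose closure contains the chamber defining $X$ yields a $\mathbb{Q}$-factorial $T$-variety $Y$ together with a manifestly $T$-equivariant small contraction to $X$.

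For the mld statement, smallness of $g$ means that the exceptional locus of $g$ has codimension at least two in $Y$, so no prime divisor is $g$-exceptional. Consequently $K_{Y} = g^{*}K_{X}$ as $\mathbb{Q}$-Cartier divisors, so $g$ is crepant and $a_{E}(X) = a_{E}(Y)$ for every divisor $E$ valuatively over both varieties. Combining this with the fact that a divisor over $X$ (equivalently, over $Y$) has center $\{x\}$ on $X$ if and only if its center on $Y$ lies in $g^{-1}(x)$, the equality $\mld_{x}(X) = \mld_{y}(Y)$ for every closed $y \in g^{-1}(x)$ follows by a standard argument comparing the minima on both sides.

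The main obstacle I anticipate is the $T$-equivariance step: justifying that the output of the MMP can be chosen compatibly with the torus action requires either the abstract finiteness-plus-connectedness argument above or the more concrete characteristic space approach. The latter is cleaner and sufficient for the applications of the lemma in the present paper, since general arrangement varieties have finitely generated Cox rings by Theorem 6.5 of \cite{HaHiWr2019OnTorusAct}.
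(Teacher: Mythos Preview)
Your route differs from the paper's. The paper first takes a $T$-equivariant log resolution $f\colon\tilde X\to X$ (which exists by \cite{Vil2014Equimultiplicity}) and then applies Prokhorov's equivariant MMP directly (Proposition~4.4.1 in \cite{Pro2021EquivMMP}, which by \S1 of that paper applies to algebraic-group actions, not only finite ones): since $X$ is klt, no $f$-exceptional divisor has log discrepancy zero, so the resulting $T$-equivariant contraction $g\colon Y\to X$ is small, and $Y$ is $T\mathbb Q$-factorial, hence $\mathbb Q$-factorial because every Weil divisor on a $T$-variety is linearly equivalent to an invariant one. This handles the lemma for an arbitrary klt $T$-variety in one stroke. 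By contrast, both of your lifting mechanisms---the Hu--Keel chamber count and the characteristic-space construction---presuppose a finitely generated Cox ring (the Mori dream space property), so your argument establishes the lemma only under that extra hypothesis; as you correctly observe, this suffices for general arrangement varieties and hence for the application in Theorem~\ref{thm:cone-sing}, but it does not cover the lemma as stated. The paper's route via equivariant MMP is therefore both shorter and strictly more general, while yours has the virtue of tying in transparently with the Cox-ring machinery already used downstream. For the mld claim, you and the paper argue identically from smallness (crepancy); note that your center bijection literally gives $\mld_x(X)=\mld_{g^{-1}(x)}(Y)\le \mld_y(Y)$, and it is this inequality that is actually used in Theorem~\ref{thm:cone-sing}.
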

Here by a \emph{contraction}, we denote a projective surjective birational morphism $g \colon Y \to X$, and we call a birational morphism $g \colon Y \to X $ \emph{small} if its exceptional locus has codimension at least two in $Y$.
\begin{proof}
  This proof is inspired by the proof of Proposition 2.1 in \cite{Kaw2011TowardsBound}.\\
  Let $f \colon \tilde{X} \to X$ be a $T$-equivariant log resolution, which exists by Theorem 8.11 in \cite{Vil2014Equimultiplicity}. Because $X$ is log terminal, there are no $f$-exceptional prime divisors with log discrepancy equal to zero, thus by Proposition 4.4.1 in \cite{Pro2021EquivMMP}, there exists a \emph{small} $T$-equivariant contraction $g \colon Y \to X$, for a $ \mathbb{Q} $-factorial variety $Y$. The equality for the minimal log discrepancies follows directly from the fact that $g$ is small.\\
  Note that the hypothesis of Proposition 4.4.1 in \cite{Pro2021EquivMMP} assumes the group acting on $X$ to be finite, but by §1 of the same article, the statement of the proposition also holds for any algebraic group that is acting algebraically on $X$.\\
  Observe moreover that Proposition 4.4.1 in \cite{Pro2021EquivMMP} states that $Y$ is $T\mathbb{Q}$-factorial, i.e. every $T$-invariant Weil divisor is  $\mathbb{Q}$-Cartier. However, since any Weil divisor on a $T$-variety is linearly equivalent to a $T$-invariant Weil divisor, this implies that $Y$ is $\mathbb{Q}$-factorial in our setup.
\end{proof}
With all preliminary results covered, we are now able to prove the main theorem.
\begin{proof}[Proof of Theorem \ref{thm:main}]
  If $X$ is a cone singularity, the result follows from Theorem \ref{thm:cone-sing}. Thus we consider the case when $X$ is not a cone singularity. By the theory of $T$-varieties, there is a cover of $X$ by $T$-invariant open affine subsets, which are still general arrangement varieties.
  Since the minimal log discrepancy is a local invariant, we may replace $U$ by one of these open subsets that contains $x$ for the remainder of the proof. We will denote this subset by $U$.\\
  If $U$ is a cone singularity, again Theorem \ref{thm:cone-sing} applies. Otherwise, we know that $U$ is an affine general arrangement variety and not a cone singularity. Then there are exactly two possible cases, which we will treat separately. For a proof of this as well as more details on the remaining part of the proof, please consider the appendix, particularly Lemma \ref{lem:not_cone}.\\
  In the first case, $U$ is toroidal, i.e for every $u\in U$, there is a formal neighborhood isomorphic to a formal neighborhood of a point in an affine toric variety. Then we can apply Proposition 6 from \cite{Mas2001MinDis}, which states that the minimal log discrepancy is invariant under analytic isomorphism, to deduce that the result follows from Shokurov's Conjecture for toric varieties, which was proven in \cite{Bor1993TorMinDisc}.\\
  In the second case, $U$ can be embedded $T$-equivariantly as an open subset of a general arrangement variety which is a cone singularity and of the same dimension as $U$. Due to this invariant open embedding, the result follows with Theorem \ref{thm:cone-sing} and Lemma \ref{lem:opensubset}.
\end{proof}
\subsection{Complexity one}
  In the case of complexity one, we are able to prove a slightly more general statement, no longer requiring the hypothesis that the $T$-variety $X$ is klt. 
  \begin{theorem}
    Let $X$ be any $ \mathbb{Q} $-Gorenstein $T$-variety of complexity one. Then Shokurov's conjecture holds for $X$.
  \end{theorem}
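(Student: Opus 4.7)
The plan is to reduce this more general statement to Theorem~\ref{thm:main} by handling the two obstructions to a direct application: the dropped klt hypothesis, and the possibility that $X$ is non-rational and therefore not automatically a general arrangement variety. Since the minimal log discrepancy is local, by Lemma~\ref{lem:opensubset} together with the existence of $T$-invariant affine open neighborhoods of every point of a $T$-variety, I would first replace $X$ by an affine $T$-invariant open neighborhood $U$ of $x$.

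The missing klt hypothesis is dealt with via the standard dichotomy: if $U$ fails to be klt at $x$, then either it is not log canonical at $x$, in which case $\mld_x(U) = -\infty$, or there exists a divisor of log discrepancy zero centered at $x$, in which case $\mld_x(U) = 0$; in both subcases $\mld_x(U) \leq 0 \leq \dim X$ and there is nothing to prove. Otherwise $U$ is klt at $x$, and since klt is an open property I may shrink $U$ further, keeping it $T$-invariant and affine, so that $U$ is klt throughout.

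If in addition $U$ is rational, then as observed in the discussion following the definition of general arrangement varieties, $U$ is itself a general arrangement variety (any point configuration on $\mathbb{P}^1$ is trivially a general hyperplane arrangement), and Theorem~\ref{thm:main} directly yields $\mld_x(U) \leq \dim U = \dim X$. If $U$ is non-rational, I would invoke the polyhedral divisor description from Appendix~\ref{sec:p-divs}: an affine complexity-one $T$-variety is encoded by a polyhedral divisor $\mathcal{D}$ on a smooth curve $Y$, with only finitely many coefficients $\mathcal{D}_{p_1}, \dots, \mathcal{D}_{p_s}$ being nontrivial. Transferring these coefficients to $s$ distinct points of $\mathbb{P}^1$ produces a rational affine complexity-one $T$-variety $U'$ together with a point $x' \in U'$ whose formal neighborhood is isomorphic to that of $x \in U$. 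By Proposition~6 of \cite{Mas2001MinDis}, already used in the proof of Theorem~\ref{thm:main} and guaranteeing analytic invariance of the minimal log discrepancy, one obtains $\mld_x(U) = \mld_{x'}(U')$, and the rational subcase just treated finishes the argument.

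I expect the non-rational case to be the principal obstacle, and more specifically the construction of $U'$ together with the verification that its formal neighborhood at $x'$ matches that of $x \in U$. This relies on a careful unwinding of the polyhedral divisor machinery and on the observation that the local structure at $x$ is governed only by the finite list of nontrivial polyhedral coefficients, together with a check that the $\mathbb{Q}$-Gorenstein property is preserved by the transfer.
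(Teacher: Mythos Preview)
Your dichotomy for the non-klt case is where the argument breaks. When $U$ is log canonical but not klt at $x$, you assert that there is a divisor of log discrepancy zero \emph{centered at} $x$ and conclude $\mld_x(U)=0$. But the divisor $E$ witnessing the failure of klt only satisfies $x\in c_X(E)$; its center need not equal $\{x\}$, so nothing forces $\mld_x(U)=0$. Concretely, take $X=C\times\mathbb{A}^1$ where $C$ is the affine cone over an elliptic curve, with the evident $(\mathbb{C}^*)^2$-action. This is a $\mathbb{Q}$-Gorenstein complexity-one $T$-variety that is lc but not klt along $\{0\}\times\mathbb{A}^1$, yet the product formula gives $\mld_{(0,t)}(X)=\mld_0(C)+\mld_t(\mathbb{A}^1)=0+1=1$. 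So the lc-but-not-klt case genuinely requires work beyond this observation.

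The paper proceeds quite differently here. It does not try to bound $\mld_x$ directly when $X$ is lc but not klt; instead it traces where the klt hypothesis was actually used in Theorem~\ref{thm:cone-sing}, namely only in Lemma~\ref{lem:small-Q-fact} to ensure that the $T$-equivariant $\mathbb{Q}$-factorialization $g\colon Y\to X$ is small, so that the Cox ring of $Y$ inherits the complete-intersection property from that of $X$. In complexity one this step is unnecessary: the paper's observation~(i) says that every $\mathbb{Q}$-factorial cone singularity of complexity one is already rational, hence itself a general arrangement variety, so the Cox ring of $Y$ is a complete intersection in its own right and the quotient argument of Theorem~\ref{thm:cone-sing} goes through for $Y$ without invoking smallness of $g$. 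The genuinely non-log-canonical case is handled separately by the direct blow-up argument of Lemma~\ref{lem:not-lc}. Note that observation~(i) simultaneously absorbs the non-rational case after $\mathbb{Q}$-factorialization, so your proposed formal-neighborhood transfer of the polyhedral coefficients to $\mathbb{P}^1$ --- while a plausible idea --- is not the route taken and would need its own verification.
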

  We point out the main observations needed to deduce this from the results in this section.
  \begin{enumerate}[(i)]
  \item \label{obs-i} Every $ \mathbb{Q} $-factorial cone singularity of complexity one is rational and hence in particular a general arrangement variety.
    \item If $X$ is not log canonical, we can prove by a direct argument (without using Cox rings) that for any $x \in X$, $\mld_{x}\left( X \right) \leq \dim \left( X \right)$, see Lemma \ref{lem:not-lc} below.
    \item If $X$ is a $T$-variety that is log canonical but not klt, it is not clear whether the $ \mathbb{Q} $-factorialization $g \colon Y \to X$ from Lemma \ref{lem:small-Q-fact} is a small morphism. Hence it is not possible, with our tools, to know whether the Cox ring of $Y$ satisfies the properties we need from it. This obstacle does not appear in complexity one, due to observation \eqref{obs-i}.
  \end{enumerate}
 The following Lemma seems to be known among experts, however, we were unable to find a proof of its exact statement in the literature, which is why we provide one here.
\begin{lemma}\label{lem:not-lc}
  Let $X$ be a $ \mathbb{Q} $-Gorenstein variety and $x \in X$ a closed point. If there is no log canonical neighborhood of $x$  in $X$, then $\mld_{x}\left( X \right) \leq \dim \left( X \right)$. If $X$ is a $T$-variety, the statement holds with $T$-invariant neighborhoods.
\end{lemma}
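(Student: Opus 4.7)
The plan is the classical one: failure of log canonicity at $x$ produces divisors over $x$ of arbitrarily negative log discrepancy, so in particular some such divisor has log discrepancy at most $\dim X$. I would execute this in three steps.

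First, I would extract a witness to the non-log-canonicity. Fix a log resolution $f \colon Y \to X$ (chosen $T$-equivariant via Theorem 8.11 of \cite{Vil2014Equimultiplicity} in the $T$-variety setting) and write $K_Y - f^{*}K_X = \sum_E d_E E$, so that the log discrepancy of any exceptional prime $E$ is $a_E(X) = d_E + 1$. The non-log-canonical locus $\bigcup_{d_E < -1} f(E)$ is closed in $X$ and by hypothesis contains $x$, so some exceptional prime $E_0$ on $Y$ has $a_{E_0}(X) < 0$ and $x \in c_X(E_0)$. In the $T$-variety setting, Sumihiro's theorem provides a basis of $T$-invariant affine open neighborhoods of $x$, so the hypotheses ``no open neighborhood of $x$ is log canonical'' and ``no $T$-invariant open neighborhood of $x$ is log canonical'' are equivalent, and no separate equivariant argument is required.

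If $c_X(E_0) = \{x\}$, then $\mld_x(X) \leq a_{E_0}(X) < 0 \leq \dim X$ and we are done. Otherwise set $n = \dim X$ and pick a closed point $p \in E_0 \cap f^{-1}(x)$, which is nonempty because $x \in f(E_0)$. Let $E_0, E_1, \dots, E_s$ be the prime divisors of the snc configuration on $Y$ through $p$. The log pair $(Y,\, -\sum_E d_E E)$ satisfies $K_Y + (-\sum_E d_E E) = f^{*}K_X$, so Example \ref{blowup-formula} applied to this pair computes log discrepancies over $X$. Blowing up $Y$ at $p$ produces an exceptional prime $F_1$ with center $\{x\}$ on $X$ and
\[
  a_{F_1}(X) = n - (s+1) + \sum_{i=0}^{s} a_{E_i}(X).
\]
I then iterate: for $k \geq 2$, let $Y_k$ be the blow-up of $Y_{k-1}$ along the smooth codimension-two subvariety $F_{k-1} \cap E_0^{(k-1)}$, where $E_0^{(k-1)}$ is the strict transform of $E_0$ in $Y_{k-1}$ and $F_k$ the resulting exceptional prime. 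The blow-up formula yields $a_{F_k}(X) = a_{F_{k-1}}(X) + a_{E_0}(X)$ at every stage, so inductively
\[
  a_{F_k}(X) = n - (s+1) + k \cdot a_{E_0}(X) + \sum_{i=1}^{s} a_{E_i}(X),
\]
and each $F_k$ has center $\{x\}$ on $X$. Because $a_{E_0}(X) < 0$, the right-hand side tends to $-\infty$, so $\mld_x(X) \leq a_{F_k}(X) \leq \dim X$ for $k$ sufficiently large.

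The step requiring care is to verify that at each iteration only the two divisors $F_{k-1}$ and $E_0^{(k-1)}$ contain the chosen center, which is what keeps the per-step increment in log discrepancy equal to $a_{E_0}(X)$ rather than picking up extra contributions from the other $E_i$. In the snc configuration this follows from transversality: inside the exceptional $\mathbb{P}^{n-1}$ above $p$, the strict transforms of the $E_i$ with $i \neq 0$ trace out hyperplanes distinct from that of $E_0$, and the analogous distinctness of the relevant sections persists inductively in the subsequent $\mathbb{P}^1$-bundles $F_k$. This transversality forces the single negative contribution $a_{E_0}(X)$ to compound with each iteration, which is the crux of the argument.
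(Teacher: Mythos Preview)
Your argument is correct and follows the same classical iterated blow-up strategy as the paper (essentially the Koll\'ar--Mori argument that non-log-canonicity forces $\mld = -\infty$, localised at $x$). The only difference is the order of operations: the paper first iterates codimension-two blow-ups inside the witness divisor $E$ to drive the discrepancy below the threshold $-L$ and then performs one final blow-up over $x$ to force the center to be $\{x\}$, whereas you first blow up a point of $f^{-1}(x)$ to pin the center at $\{x\}$ and then iterate along $F_{k-1}\cap E_0^{(k-1)}$; your version is slightly cleaner in that the transversality bookkeeping is made explicit, and your use of Sumihiro to reduce the $T$-invariant statement to the ordinary one is a tidy shortcut the paper does not spell out.
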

\begin{proof}
  We will assume that $\dim \left( X \right) \geq 3$, which is no loss of generality, since Shokurov's conjecture has already been proven for surfaces.\\
  By Theorem 8.11 \cite{Vil2014Equimultiplicity}, there exists a log resolution of $X$, which can be chosen $T$-equivariantly if $X$ is a $T$-variety. Let $f \colon Y \to X$ denote this resolution and let $D_{1}, \dots, D_{s}$ be the $f$-exceptional prime divisors. Let $\Delta = \sum_{i}^{} -\d_{D_{i}}\left( X \right) \cdot D_{i}$ be the discrepancy divisor on $Y$. It follows from Corollary 2.31 (3) in \cite{KoMo1998BirGeom} that $X$ is log canonical if and only if $\d_{D_{i}} \geq -1$ for all $i$.\\
  Discarding from $X$ the centers of all those $D_{i}$ for which $x \notin c_{X}\left( D_{i} \right)$ results in a ($T$-invariant) open subset of $X$ and does not change $\mld_{x}\left( X \right)$. Hence we may assume that $x \in c_{X} \left( D_{i} \right)$ for all $i$.\\
  Suppose now that $X$ is not log canonical, by reordering if necessary, we may assume that $E = D_{1}$ is an $f$-exceptional prime divisor with discrepancy $\d_{E}\left( X \right) = -1-c$ for some $c > 0$. It is well known that if a variety $X$ is not log canonical, then $\mld_{X}\left( X \right) = - \infty$, see for instance Corollary 2.31 (1) in \cite{KoMo1998BirGeom}. We will slightly modify the proof of that Lemma to show that if $X$ is not log canonical, and $x  \in c_{X}\left( E \right)$, where $E$ is the divisor from above with discrepancy $-1-c$, then $\mld_{x}\left( X \right) \leq \dim \left( X \right)$, which is sufficient for our purposes.\\
  Let $Z_{0} \subseteq E$ be a codimension two subset of $Y$ that contains a point $y \in f^{-1}\left( x \right)$ and that is not contained in any of the $f$-exceptional divisors except $E$. By shrinking, we may assume that $E$ and $Z_{0}$ are smooth in $Y$. 
  Let $f_{1} \colon Y_{1} \to Y$ be the blow-up of $Y$ in $Z_{0}$ and $E_{1}$ its exceptional divisor. Then $\d_{E_{1}}\left( Y, \Delta \right) = \d_{E_{1}}\left( X \right) = 2 -1 - c - 1 = - c$ by the blow-up formula for discrepancies.\\
  Let $L = \sum_{E' \ne E}^{} \d_{E'}\left( X \right)$, where the sum is taken over all $f$-exceptional divisors different from $E$. If $L > 0$, then, proceeding as in the proof of Corollary 2.31 (1) in \cite{KoMo1998BirGeom}, we repeatedly blow up in a smooth, codimension two center to obtain after $\ell $ steps an exceptional divisor $E_{\ell}$ with negative discrepancy $\d_{E_{\ell}}\left( X \right) \leq -L$. If $L \leq 0$, we can take $\ell = 0$ and $E_{0} = E$.\\
  Now we blow up the intersection of $E_\ell$ with the strict transform of $E$ and the inverse image of $x$, of which we may assume after shrinking $Y_{\ell}$ that it is smooth, and obtain a birational morphism $Y_{\ell +1} \to Y_{\ell}$ with exceptional divisor $E_{\ell +1}$. The center of $E_{\ell +1}$ over $X$  is $x$, and the log discrepancy of $E_{\ell +1}$ satisfies 
  \begin{align*}
    a_{E_{\ell +1}}\left( X \right) &\leq \dim \left( X \right) + \d_{E_{\ell}}\left( X \right) + \sum_{E' \ne E}^{} \d_{E'}\left( X \right)\\
                                    &\leq \dim \left( X \right)
  \end{align*}
  by the choice of $\ell$ above. In particular, $\mld_{x}\left( X \right) \leq \dim \left( X \right)$ follows.
\end{proof}

\subsection{The lower semi-continuity conjecture}\label{sec:lsc}
A conjecture that is closely related to Shokurov's conjecture about minimal log discrepancies is the following one, proposed by Ambro in \cite{Am1999OnMLD}. It is commonly referred to as the lower semi-continuity or LSC conjecture.
\begin{conjecture}\label{con:lsc}
  Let $X$ be a $ \mathbb{Q} $-Gorenstein variety. Then the function 
  \[
    \mld \colon X  \to \mathbb{R} \cup \left\{ -\infty\right\},\quad x \mapsto  \mld_{x}\left( X \right),
  \]
  defined on the closed points of $X$, is lower semi-continuous, that is, for any $x \in X$, there exists an open neighborhood $U $ of $x$ such that $\mld_{x}\left( X \right) \leq \mld_{y}\left( X \right)$ for all $y \in U$.
\end{conjecture}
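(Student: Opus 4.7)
The plan is to prove the LSC conjecture for klt general arrangement varieties, paralleling the strategy used in Theorem \ref{thm:main}. The key ingredients will again be the description of a general arrangement variety (via its Cox ring) as a quotient of a complete intersection by a quasitorus, combined now with the theorem of Nakamura and Shibata \cite{NaSh2023InvAdjQuotS} establishing LSC for local complete intersections in quotient varieties. As in Section \ref{sec:mld-bound}, Shokurov's conjecture will then drop out as a consequence of LSC, but here we want the stronger statement on every point of a neighborhood.

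First, I would carry out the same local reduction as in the proof of Theorem \ref{thm:main}. Since LSC is a local property, it suffices to prove it on a $T$-invariant affine neighborhood $U$ of $x$. The dichotomy provided by Lemma \ref{lem:not_cone} gives three cases: $U$ is a cone singularity; $U$ is toroidal, in which case LSC reduces to LSC for toric varieties (known, by \cite{Am1999OnMLD}), via the invariance under analytic isomorphism used in \cite{Mas2001MinDis}; or $U$ embeds $T$-equivariantly as an open subset of a cone singularity, which reduces to the cone singularity case once one checks that $\mld_y$ is preserved pointwise under the open embedding (a pointwise version of Lemma \ref{lem:opensubset}).

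For a klt cone singularity $X$ that is a general arrangement variety, I would argue as in Theorem \ref{thm:cone-sing}. Apply Lemma \ref{lem:small-Q-fact} to pass to a small $T$-equivariant $\mathbb{Q}$-factorialization $g\colon Y \to X$; since $g$ is small, it preserves $\mld$ pointwise on fibers, so LSC for $Y$ implies LSC for $X$. The Cox ring $R$ of $Y$ is a complete intersection, and we can write $Y \cong \hat Y \sslash H$ geometrically, with $\hat Y \subseteq \Spec R$ open and $H$ a quasitorus acting with finite isotropy groups and trivial generic isotropy on every prime divisor. Splitting $H \cong T \times G$ with $T$ a torus and $G$ a finite abelian group, we obtain a tower $\hat Y \to \hat Y / G \to (\hat Y / G)/T \cong Y$. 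Apply Nakamura--Shibata to the finite, \'etale-in-codimension-one quotient $\hat Y \to \hat Y/G$ to get LSC on $\hat Y / G$; then the free torus quotient $\hat Y / G \to Y$ preserves LSC via the product formula and the smoothness of $T$ (as in Lemma \ref{lem:torus-q}).

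The main obstacle will be turning the pointwise inequality of Proposition \ref{prop:quotient} into a genuinely open-neighborhood statement. LSC requires the existence of a single open $U \ni x$ such that $\mld_x(X) \leq \mld_y(X)$ for all $y \in U$; a priori, quotienting by $H$ only guarantees such a neighborhood upstairs in $\hat Y$ around each preimage point. The natural fix is to use that the LSC neighborhood on $\hat Y/G$ can be chosen $T$-invariant (by intersecting its $T$-translates, possible since $T$ is connected and acts with trivial stabilizers on the relevant open locus) and then to descend via the geometric torus quotient. The technical subtleties are: verifying $T$-invariance of the Nakamura--Shibata neighborhood compatibly with the finite quotient step, and handling the locus in $\hat Y$ where stabilizers jump, which is exactly where one must invoke the codimension-one assumptions on $H$-action built into the Cox ring formalism.
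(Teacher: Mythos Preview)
The paper does not prove this statement; Conjecture~\ref{con:lsc} is recorded as an open conjecture, and the surrounding discussion in Section~\ref{sec:lsc} explicitly says that ``the tools and results that we developed in this article do not seem to immediately imply this'' for klt general arrangement varieties. So there is no proof to compare your proposal against; you are attempting something the authors left open.

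Your sketch also has genuine gaps at precisely the points the authors likely had in mind. First, the step ``intersect the $T$-translates of the LSC neighborhood to make it $T$-invariant'' does not work: $T$ is a non-compact torus, and an infinite intersection of open sets need not be open (nor nonempty). This is the heart of the difficulty in pushing LSC down through the free torus quotient; the product formula in Lemma~\ref{lem:torus-q} gives a pointwise identity, not an open-neighborhood statement on the base. Second, in the proof of Proposition~\ref{prop:quotient} the passage to the open set $U_0$ on which the stabilizer subgroup $G_1$ is finite depends on the chosen point $x$; for LSC you would need a single such $U_0$ working uniformly over a neighborhood of $x$ downstairs, and nothing in the argument provides that. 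Third, descending LSC through the small $\mathbb{Q}$-factorialization $g\colon Y\to X$ is not as immediate as you claim: although $\mld_x(X)=\mld_y(Y)$ for every $y\in g^{-1}(x)$, the fiber $g^{-1}(x)$ can be positive-dimensional, and an LSC neighborhood of a single $y\in Y$ need not map to an open neighborhood of $x$ in $X$; one would need LSC to hold simultaneously along the entire fiber and then use properness of $g$, which is an extra argument you have not supplied. Finally, note that your proposal only addresses the klt general arrangement case, not the full conjecture as stated.
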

In the same work, Ambro showed that the LSC conjecture is equivalent to a stronger version of the inequality proposed by Shokurov's conjecture, in particular, it implies Shokurov's conjecture. Moreover, he showed that the LSC conjecture holds for toric varieties and in dimension at most three.\\
The LSC conjecture was further proven for smooth varieties in \cite{EiMuYa2003LogDis} and for local complete intersections in \cite{EiMu2004InversionOfAdj}.\\
Knowing all this, we assume that the LSC conjecture should hold for klt general arrangement varieties as well, however the tools and results that we developed in this article do not seem to immediately imply this.

\appendix

\section{Polyhedral divisors}\label{sec:p-divs}
In this appendix, we will briefly introduce the language of polyhedral divisors and prove the open claims from Section \ref{sec:mld-bound}. 
For a more detailed introduction to the subject, we recommend the survey article \cite{AIOPSV2012GeomOfTVar} as a starting point.\\
Let $N$ be a lattice, i.e. a free abelian group of finite rank. Let $M$ be the dual lattice $\Hom_{\mathbb{Z}}\left( N,\mathbb{Z} \right)$ of $N$, and denote by $\left\langle \cdot,\cdot \right\rangle$ the natural pairing between $M$ and $N$.\\
The $\mathbb{Q}$-vector space $N \otimes_{\mathbb{Z}} \mathbb{Q}$ spanned by a lattice $N$ will be denoted by $N_{\mathbb{Q}}$. By a \emph{polyhedron} in $N_{\mathbb{Q}}$, we denote a convex rational polyhedron in $N_{\mathbb{Q}}$, i.e. a subset of $N_{\mathbb{Q}}$ that is an intersection of a finite number of closed half-spaces in $N_{\mathbb{Q}}$.\\
Let $\Delta \subseteq N_{\mathbb{Q}}$ be a polyhedron and $H \subseteq N_{\mathbb{Q}}$ a closed half-space such that $\Delta \subseteq H$. The intersection $\Delta' = H \cap \Delta$ is called a \emph{face} of $\Delta$. We use the shorthand $\Delta \left( n \right)$ to denote the set of $n$-dimensional faces of $\Delta$.\\
By a \emph{cone} in $N_{\mathbb{Q}}$, we denote a convex rational polyhedral cone in $N_{\mathbb{Q}}$. The \emph{dual} of a cone $\sigma \subseteq N_{\mathbb{Q}}$ is the set 
\[
  \sigma ^{\vee} = \left\{ u \in M_{\mathbb{Q}}\mid \left\langle u,v \right\rangle \geq 0 \text{ for all }v \in \sigma \right\} \subseteq M_{\mathbb{Q}},
\]
 and this set is in fact a cone in $M_{\mathbb{Q}}$. We call a cone pointed if $\sigma \cap - \sigma = \left\{ 0\right\}$. It is a fact that a cone $\sigma \subseteq N_{\mathbb{Q}}$ is pointed if and only if its dual cone $\sigma^{\vee} \subseteq M_{\mathbb{Q}}$ is full-dimensional.\\
Let $\Delta \subseteq N_{\mathbb{Q}}$ be a polyhedron, the \emph{tail cone} of $\Delta$, denoted $\tail \left( \Delta \right)$, is the cone spanned by all unbounded directions in $\Delta$. Note that the tail cone of a bounded polyhedron is the trivial cone $\left\{0\right\}$. \\
The \emph{$w$-face} of a polyhedron $\Delta$, for a $w \in \tail \left( \Delta \right)^{\vee} \cap M$, is the set
\[
  \face \left( \Delta, w \right) := \left\{v \in \Delta \mid \left\langle w,v \right\rangle \leq \left\langle w, v' \right\rangle \text{ for all } v' \in \Delta\right\},
\]
which is a polyhedron with tail cone $\tail \left( \Delta \right) \cap w^{\perp} $.\\
Let $\sigma \subseteq N_{_\mathbb{Q}}$ be a pointed cone and consider the set of all polyhedra $\Delta $ in $N_{\mathbb{Q}}$ whose tail is equal to $ \sigma$. Define a semigroup structure on this set via Minkowski addition of polyhedra. This set is closed under Minkowski sums and the neutral element is the cone  $\sigma$ itself. We denote the resulting semigroup by $\Pol_{\mathbb{Q}}^{+}\left( N, \sigma\right) $. \\
Let  $Y$ be a normal semiprojective variety (that is, $ Y$ is projective over an affine variety). Let $\cadiv_{\geq 0}\left( Y \right)$ denote the semigroup of effective Cartier divisors on $Y$ under addition. A \emph{polyhedral divisor} $\mathcal{D}$ on $\left( Y, N \right)$ is a finite formal sum
\[
  \mathcal{D} = \sum_{Z}^{} \Delta_{Z} \otimes Z \in \Pol_{\mathbb{Q}}^{+}\left( N, \sigma\right)  \otimes \cadiv_{\geq 0}\left( Y \right).
\]
Hence a polyhedral divisor is a formal sum of Cartier divisors on $Y$ whose coefficients, instead of numbers, are polyhedra in $N_{\mathbb{Q}}$ with a common tail cone equal to $\sigma$. We call the cone $\sigma$ the \emph{tail cone} of $\mathcal{D}$, denoted $\tail \left( \mathcal{D} \right)$. \\
Note that we allow the empty set as a coefficient. The \emph{locus} of $\mathcal{D}$ is the set $\Loc \mathcal{D} = Y \setminus \bigcup_{\Delta_{Z} = \emptyset}^{}Z$. We may abbreviate this to $\Loc$ to make the notation simpler if $\mathcal{D}$ is clear from the context. The \emph{support} of a polyhedral divisor $\mathcal{D}$ as above is the set of all $Z \in \cadiv \left( Y \right)$ with coefficient $\Delta_Z$ different from $\sigma$.\\
For an element $u \in \tail\left( \mathcal{D} \right)^{\vee}$, we define the evaluation of $\mathcal{D}$ at $u$,
\[
  \mathcal{D} \left( u \right) := \sum_{Z}^{} \min \left\langle \Delta_{Z},u \right\rangle \cdot Z_{\mid \Loc } ,
\]
which is a Cartier $\mathbb{Q}$-divisor on $\Loc \mathcal{D} \subseteq Y$.
\begin{definition}
  Let $\mathcal{D}$ be a polyhedral divisor on $\left( \mathbb{P}^{d}, N \right)$ such that the Cartier divisors in the support of $\mathcal{D}$ form a general arrangement of hyperplanes in $\mathbb{P}^{d}$. We will call such $\mathcal{D}$ a \emph{general arrangement} polyhedral divisor.
\end{definition}
Let $\mathcal{D} = \sum_{Z}^{} \Delta_{Z} \otimes Z$ be a general arrangement polyhedral divisor. Define the \emph{degree} of $\mathcal{D}$ as 
\[
  \deg \mathcal{D} = \sum_{Z}^{} \Delta_{Z},
\]
  which is a polyhedron in $N_{\mathbb{Q}}$ with tail cone $\sigma$.
\begin{definition}
Let $\mathcal{D}$ be a polyhedral divisor on $\left( Y,N \right)$. We call $\mathcal{D}$ a \emph{$p$-divisor} or \emph{proper} if the evaluations $\mathcal{D} \left( u \right)$ are semiample for all $u \in \sigma^{\vee} $ and, additionally, $\mathcal{D}\left( u \right)$ is big for all $u \in  \int \left( \sigma ^{\vee}  \right)$, where $\int$ denotes the relative interior of a polyhedron.
\end{definition}
\begin{remark}
  For a general arrangement polyhedral divisor $\mathcal{D}$, there is the following simpler characterization of properness: $\mathcal{D}$ is proper if and only if $\deg \left( \mathcal{D} \right) \subsetneq \tail \left( \mathcal{D} \right)$ holds.
\end{remark}
Let $\mathcal{D}$ be a p-divisor on $\left( Y,N \right)$. We associate to $\mathcal{D}$ a sheaf of graded algebras as follows:
for $u \in \sigma^{\vee} \cap M$, let $\mathcal{A}_{u} = \mathcal{O}_{\Loc} \left( \mathcal{D}\left( u \right) \right)\chi^{u}$ and 
\[
  \mathcal{A} = \bigoplus_{u \in \sigma^{\vee} \cap M}^{} \mathcal{A}_{u}.
\]
We also associate the graded algebra 
\[
  A = \bigoplus_{u \in \sigma^{\vee} \cap M}^{} \Gamma \left( \Loc \left( \mathcal{D} \right), \mathcal{O}_{\Loc}\left( \mathcal{D}\left( u \right) \right) \right)
\]
to $\mathcal{D}$ and define the varieties
\[
  \widetilde{\TV} \left( \mathcal{D} \right) = \Spec_{Y}\left( \mathcal{A} \right) \quad \text{and}  \quad \TV \left( \mathcal{D} \right) = \Spec \left( A \right).
\]
By Theorems 3.1 and 3.4 in \cite{AlHa2006PolyAndTorus}, the varieties thus obtained are $T$-varieties and moreover, all affine $T$-varieties arise as $\TV \left( \mathcal{D} \right)$ for some $p$-divisor $\mathcal{D}$ over a normal semiprojective variety $Y$.\\
We now have all the background needed to describe general arrangement varieties in terms of $p$-divisors:
\begin{proposition}
  Let $\mathcal{D}$ be a general arrangement $p$-divisor on $\left( \mathbb{P}^{d},N \right)$. The corresponding affine $T$-variety $\TV \left( \mathcal{D} \right)$ is a general arrangement variety of complexity $d$ and moreover, all affine general arrangement varieties arise in this way.
\end{proposition}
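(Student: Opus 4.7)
The plan is to verify both directions of the statement using the Altmann--Hausen correspondence cited immediately before the proposition.

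For the forward implication, let $\mathcal{D}$ be a general arrangement $p$-divisor on $(\mathbb{P}^d, N)$ and set $X = \TV(\mathcal{D})$. The acting torus has rank $\operatorname{rk} N$ while $\dim X = d + \operatorname{rk} N$, so the complexity equals $d$. The relative spectrum $\widetilde{\TV}(\mathcal{D}) = \Spec_{\mathbb{P}^d}(\mathcal{A})$ affords a $T$-equivariant affine morphism onto $\mathbb{P}^d$, which together with the birational contraction $\widetilde{\TV}(\mathcal{D}) \to \TV(\mathcal{D})$ from Altmann--Hausen theory induces a rational quotient $\pi \colon X \dashrightarrow \mathbb{P}^d$. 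I would take as representative $\psi \colon W \to V$ the restriction of this map to a suitable open subset $V$ of the complement of the support of $\mathcal{D}$, and exhibit the prime divisors $D_{ij}$ over each hyperplane $H$ in the support as indexed by the vertices of $\Delta_H$; the fact that $H$ lies in the support means $\Delta_H \neq \tail(\mathcal{D})$, so these vertices exist and give rise to $T$-invariant prime divisors with non-trivial generic isotropy. Conditions (i)--(iv) of the maximal orbit quotient definition then follow from the standard slice description of $\widetilde{\TV}(\mathcal{D})$ as a relative toric bundle over $\mathbb{P}^d$ degenerating along the support of $\mathcal{D}$. Since this support is a general hyperplane arrangement by hypothesis, so are the doubling divisors.

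For the converse, let $X$ be an affine general arrangement variety with maximal orbit quotient $\pi \colon X \dashrightarrow \mathbb{P}^d$ and doubling divisors $C_0, \dots, C_r$ forming a general hyperplane arrangement. Altmann--Hausen provides a $p$-divisor $\mathcal{D}'$ on some normal semiprojective variety $Y'$ with $X \cong \TV(\mathcal{D}')$; the natural morphism $\widetilde{\TV}(\mathcal{D}') \to Y'$ exhibits $Y'$ as a rational quotient of $X$ by $T$, so $Y'$ must be birational to $\mathbb{P}^d$. Via a common resolution of $Y'$ and $\mathbb{P}^d$ and the standard base-change procedure for $p$-divisors, one obtains a $p$-divisor on $\mathbb{P}^d$ whose associated affine variety is still $X$, and whose support matches the doubling divisors. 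This last identification relies on the bijection between pairs consisting of a prime divisor $Z$ in the support of $\mathcal{D}'$ together with a vertex of $\Delta_Z$, and $T$-invariant prime divisors of non-trivial generic isotropy on $\TV(\mathcal{D}')$.

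The main obstacle will be the base-change step in the converse direction. The Altmann--Hausen data is not unique for a given affine $T$-variety, so one must argue that a representative can be chosen with $Y' = \mathbb{P}^d$ and with the support of $\mathcal{D}'$ equal to the set of doubling divisors. I would address this either through a direct appeal to the explicit Cox-ring construction of general arrangement varieties in \cite{HaHiWr2019OnTorusAct}, or via a contraction argument for $p$-divisors that collapses those components of the support of $\mathcal{D}'$ which do not correspond to doubling divisors, leaving the $T$-variety $\TV(\mathcal{D}')$ unchanged.
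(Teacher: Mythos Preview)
The paper's own proof is a single sentence: it cites Remark~6.19 of \cite{HaHiWr2019OnTorusAct} and nothing else. Your proposal is therefore not the same approach; you attempt to reconstruct the correspondence directly from Altmann--Hausen theory rather than defer to the reference.

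Your forward direction is a reasonable outline, though the verification of conditions (i)--(iv) for the maximal orbit quotient is left at the level of ``follows from the standard slice description,'' which would need to be spelled out. The real issue is the converse. You correctly flag the base-change step as the main obstacle, and it is a genuine one: starting from an arbitrary Altmann--Hausen datum $(Y',\mathcal{D}')$ for $X$, passing to a common resolution of $Y'$ and $\mathbb{P}^d$ and then pushing the $p$-divisor down to $\mathbb{P}^d$ does not come for free---properness can fail under birational modification of the base, and controlling the support so that it lands exactly on the doubling divisors requires more than birationality of $Y'$ with $\mathbb{P}^d$. Your two proposed fixes are (a) appeal to the Cox-ring description in \cite{HaHiWr2019OnTorusAct}, or (b) a contraction argument on the $p$-divisor side. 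Option (a) is precisely what the paper does, so if you take that route you have not gained anything over the one-line proof. Option (b) is plausible but is itself non-trivial and not carried out.

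In short: your sketch is a more ambitious, self-contained route, but the gap you identify in the converse is real, and the cleanest way to close it is the citation the paper already uses.
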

\begin{proof}
  This follows directly from Remark 6.19 in \cite{HaHiWr2019OnTorusAct}.
\end{proof}
It is possible to characterize $T$-invariant open embeddings between affine $T$-varieties in terms of the corresponding $p$-divisors, but in general, the conditions on the $p$-divisors are quite complicated. However, for general arrangement varieties, there is a rather straightforward combinatorial criterion in terms of the $p$-divisors to characterize open embeddings. Lemma 6.7 in \cite{AlHaSu2008Gluing} tells us the following:\\
Let $\mathcal{D}_{1} = \sum_{Z}^{} \Delta^{\left( 1 \right)}_{Z}\otimes Z$ and $\mathcal{D}_{2}= \sum_{Z}^{} \Delta_{Z}^{\left( 2 \right)}\otimes Z$ be two $p$-divisors on $\left( Y,N \right)$ with the same support and such that $\mathcal{D}_{1}$ has non-empty coefficients. Let $u\in \tail \left( \mathcal{D}_{1} \right)^{\vee} $ be such that every coefficient $\Delta^{\left( 2 \right)}_{Z}$ of $\mathcal{D}_{2}$ is the $u$-face of the corresponding coefficient $\Delta_{Z}^{\left( 1 \right)}$ of $\mathcal{D}_{1}$. Then the induced morphism of $T$-varieties $\TV \left( \mathcal{D}_{2}\right) \to \TV \left( \mathcal{D}_{1} \right)  $ is an open embedding if and only if 
\[
  \forall y \in Y \exists D_{y}\in | \mathcal{D}_{1}\left( \mathbb{N}u \right) | : y \notin \supp D_{y}.
\]
In our setting, since $\mathcal{D}_{1}$ is a $p$-divisor on $\left( \mathbb{P}^{d},N \right)$, $\mathcal{D}_{1}\left( u \right)$ is semiample for all $u\in \tail \left( \mathcal{D}_{1} \right)^{\vee} $, hence this condition is certainly fulfilled. Hence the face relations on the coefficients of $\mathcal{D}_{1}$ and $\mathcal{D}_{2}$ described just above are sufficient to guarantee an open embedding $\TV \left( \mathcal{D}_{2} \right)\to \TV \left( \mathcal{D}_{1} \right)$.\\
Finally, we can state and prove the lemma used to treat some of the special cases in Section \ref{sec:mld-bound}.
\begin{lemma}\label{lem:not_cone}
  Let $X$ be an affine general arrangement variety of complexity $d$ that is not a cone singularity. Then either $X$ is toroidal, or otherwise $X$ can be embedded $T$-equivariantly as an open subset of a general arrangement variety of the same complexity and dimension and which is a cone singularity.
\end{lemma}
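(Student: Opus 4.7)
The plan is to translate the dichotomy into combinatorial conditions on the polyhedral data. Write $X = \TV(\mathcal{D})$ for a general arrangement $p$-divisor $\mathcal{D} = \sum_{i=0}^{r} \Delta_{i}\otimes H_{i}$ on $(\mathbb{P}^{d}, N)$ with tail cone $\sigma$. The first step is to verify a criterion for cone singularities in these terms: $X$ is a cone singularity if and only if $\sigma$ is pointed and $\Loc(\mathcal{D}) = \mathbb{P}^{d}$. The pointedness of $\sigma$ is equivalent to the positive grading on $\mathbb{C}[X]$ described in Remark \ref{rem:cone}, and projectivity of the locus ensures the torus action admits a unique attracting fixed point. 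The hypothesis that $X$ is not a cone singularity therefore splits into two subcases.

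\emph{Case 1 (toroidal).} If $\sigma$ is not pointed, then $\sigma \cap (-\sigma)$ contains a nonzero linear subspace, which gives rise to a subtorus acting on $X$ purely by invertible characters. Equivalently, the effective $T$-action extends to a strictly larger torus, realising $X$ as an affine toric variety via a toric downgrade in the sense of \cite{AlHa2006PolyAndTorus}; in particular $X$ is toroidal.

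\emph{Case 2 (embedding into a cone singularity).} If $\sigma$ is pointed but the set $I := \{i : \Delta_{i} = \emptyset\}$ is nonempty, I would construct $\tilde{\mathcal{D}} = \sum \tilde{\Delta}_{i}\otimes H_{i}$ by setting $\tilde{\Delta}_{i} := \Delta_{i}$ for $i \notin I$ and $\tilde{\Delta}_{i} := \sigma + v_{i}$ for $i \in I$, where $v_{i} \in \sigma \cap N$ is chosen small enough that $\deg(\tilde{\mathcal{D}}) \subsetneq \sigma$ still holds; the general arrangement condition is preserved since the hyperplanes $H_{i}$ are unchanged. Then $\tilde{X} := \TV(\tilde{\mathcal{D}})$ has pointed tail and locus $\mathbb{P}^{d}$, so it is a cone singularity of the same complexity and dimension as $X$. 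The open embedding $X \hookrightarrow \tilde{X}$ would follow from the face criterion recalled before the lemma, by choosing $u \in \sigma^{\vee}$ with $\langle u, v_{i}\rangle > 0$ for $i \in I$: this exhibits $\Delta_{i}$ as the $u$-face of $\tilde{\Delta}_{i}$ for $i \notin I$, while on the filled hyperplanes the $u$-face degenerates to match the missing coefficients of $\mathcal{D}$.

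The main obstacle I anticipate is reconciling the face criterion with empty coefficients, since an empty polyhedron is not literally a face of a nonempty one; the statement recalled from \cite{AlHaSu2008Gluing} must therefore be interpreted carefully in this degenerate case. This technicality is also the underlying reason the toroidal case has to be separated out: in configurations where the filling construction of Case 2 would destroy properness or the general arrangement property, one falls back into Case 1 and must argue directly from the combinatorial structure on $\Loc(\mathcal{D})$ that $X$ is formally locally toric.
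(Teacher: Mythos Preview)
Your dichotomy is based on the wrong combinatorial invariant, and as written the two cases do not cover the situations that actually occur.

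By the definition recalled in the appendix, the tail cone $\sigma$ of a $p$-divisor is \emph{always pointed}; the semigroup $\Pol_{\mathbb{Q}}^{+}(N,\sigma)$ is only formed for pointed $\sigma$. So your Case~1 is vacuous, and the argument you give there (that a nontrivial lineality space in $\sigma$ forces $X$ to be a toric downgrade) never applies. What the positive-weight condition in Remark~\ref{rem:cone} really says is that the \emph{weight cone} $\sigma^{\vee}$ is pointed, i.e.\ that $\sigma$ is \emph{full-dimensional}; you have exchanged ``pointed'' and ``full-dimensional''. The case that your argument never touches is therefore precisely the one where $\sigma$ is pointed but not full-dimensional, and this is the case in which the paper constructs the open embedding into a cone singularity, by enlarging $\sigma$ (and all coefficients) by a cone in the complementary directions and then reading off the face relation.

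Conversely, your Case~2 treats $\Loc(\mathcal{D})\neq\mathbb{P}^{d}$ as the ``embed into a cone singularity'' case, but this is where the paper obtains the toroidal conclusion: when there are non-constant $T$-invariant functions one has $\Loc(\mathcal{D})$ affine, hence $X=\widetilde{\TV}(\mathcal{D})$, and the cited result of Liendo--S\"uss gives toroidality. Your filling-in construction also runs into the problem you yourself flag: an empty coefficient is not a $u$-face of a nonempty polyhedron, and for the unchanged coefficients $\Delta_{i}=\tilde{\Delta}_{i}$ the equality $\face(\tilde{\Delta}_{i},u)=\Delta_{i}$ forces $u\in\sigma^{\perp}$, which is $\{0\}$ once $\sigma$ is full-dimensional---contradicting $\langle u,v_{i}\rangle>0$. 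So the face criterion cannot be invoked in the way you describe. Finally, ``toroidal'' here means formally locally toric, not that $X$ is itself a toric variety, so even in the correct toroidal case the conclusion is not that $X$ is a toric downgrade.
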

\begin{proof}
  We start by pointing out that an affine $T$-variety $X = \TV \left( \mathcal{D} \right)$ is a cone singularity if and only if the only invariant regular functions on $X$ are the constant functions and the tail cone of the associated $p$-divisor $\mathcal{D}$ has a full-dimensional tail cone $\sigma$ in $N_{\mathbb{Q}}$. To see this, recall that by Remark \ref{rem:cone}, $X$ is a cone singularity if and only if $X$ has a $T$-invariant embedding into an affine space such that the torus $T$ acts with positive weights on the coordinates. The positive weights imply that the weight cone for the torus action, which is the dual cone $\sigma^{\vee}$, is pointed, which implies that $\sigma$ is full-dimensional. Moreover, the positive weights directly imply that the only $T$-invariant regular functions on $X$ are the constant ones.\\
 For the reverse implication, observe that if the only $T$-invariant regular functions on $X$ are the constants, then no weight of the action may be zero. Moreover, if $\sigma $ is full-dimensional, then its dual cone is pointed, hence after a change of coordinates, we may assume that all weights of the action are positive.\\
  Now, let $X $ be as in the hypothesis. 
  Let $\mathcal{D}$ be a $p$-divisor on $\left( \mathbb{P}^{d},N \right)$ such that $X = \TV \left( \mathcal{D} \right)$ and let $\sigma = \tail \left( \mathcal{D} \right)$.\\
  If $X$ has non-constant invariant regular functions, then $\Loc \left( \mathcal{D} \right) $ must be affine. It follows that $X = \widetilde{\TV} \left( \mathcal{D} \right)$ and by Proposition 2.6 in \cite{LiSu2013NormalSing}, the variety $\widetilde{\TV} \left( \mathcal{D} \right)$ is toroidal, i.e. every point $x \in \widetilde{\TV} \left( \mathcal{D} \right)$ has a formal neighborhood that is isomorphic to a formal neighborhood of a point in an affine toric variety.\\
  Otherwise, the only invariant regular functions on $X$ are the constants, but the tail cone $\sigma = \tail \mathcal{D} \subseteq N_{\mathbb{Q}}$ is not full-dimensional. We will construct a cone singularity $X'$ to embed $X$ into as an open subset. \\
  Take a basis of the subspace of $N_{\mathbb{Q}}$ that is spanned by $\sigma$ and extend it to a basis of $N_{\mathbb{Q}}$ by vectors $e_{r+1}, \dots, e_{s}$. Let $\sigma'$ be the Minkowski sum $\sigma\hspace{0.10em} + \hspace{0.10em} \Cone \left(e_{r+1}, \dots, e_{s} \right)$.
  Defined like this, $\sigma'$ is a full-dimensional pointed cone in $N_\mathbb{Q}$. For every polyhedral coefficient $\Delta_{Z}$ of $\mathcal{D}$, we define $\Delta'_{Z} = \Delta_{Z} + \Cone \left( e_{r+1}, \dots, e_{s} \right)$. We define the polyhedral divisor $\mathcal{D}'=\sum_{Z}^{} \Delta'_{Z} \otimes  Z$ with tail cone $\sigma'$. Its degree is equal to $\deg \left( \mathcal{D} \right)\hspace{0.10em}+\hspace{0.10em} \Cone \left( e_{r+1}, \dots, e_{s} \right)$, thus the properness of $\mathcal{D}'$ immediately follows from by the properness of $\mathcal{D}$.\\
  By construction, $\mathcal{D}$ and $\mathcal{D}'$ have the same support and the face relation $\Delta_{Z} = \face \left( e_{r+1}, \face \left( e_{r+2}, \dots \face\left( e_{s},\Delta'_{Z} \right) \right) \right)$ holds at every $Z \in \supp \left( \mathcal{D}' \right)$, hence we can identify $X$ with an open $T$-invariant subset of $\TV \left( \mathcal{D}' \right)$ by the observations above the Lemma. The latter variety is is a cone singularity of complexity $d$, since the tail cone of $\mathcal{D}'$ is full-dimensional and the invertible regular functions of $\TV \left( \mathcal{D}' \right)$ are the same as those of $X$, which are in particular only the constant ones. 
\end{proof}
\bibliographystyle{alpha}
\bibliography{../../sources/sourcefile}
\end{document}